\newtheorem{theorem}{Theorem}[section]
\newtheorem{corollary}[theorem]{Corollary}
\newtheorem{lemma}[theorem]{Lemma}
\newtheorem{conjecture}[theorem]{Conjecture}
\newtheorem{econjecture}[theorem]{Equivalence Conjecture}
\newtheorem{problem}[theorem]{Problem}
\newtheorem{proposition}[theorem]{Proposition}
\theoremstyle{definition}
\newtheorem{definition}[theorem]{Definition}
\theoremstyle{remark}
\newtheorem{remark}[theorem]{\sc Remark}
\newtheorem{example}[theorem]{\sc Example}
\renewcommand{\Box}{\square}    
\newcommand{\proj}{{\rm{proj}}}
\newcommand{\Sing}{{\rm{Sing\hspace{2pt}}}}
\newcommand{\Disc}{{\rm{Disc\hspace{2pt}}}}
\renewcommand{\i}{{\rm{int}}}
\newcommand{\im}{{\rm{Im\hspace{2pt}}}}
\newcommand{\e}{\varepsilon}
\newcommand{\m}{\setminus}
\newcommand{\fin}{\hspace*{\fill}$\Box$\vspace*{2mm}}
\newcommand{\bR}{{\mathbb R}}
\newcommand{\bC}{{\mathbb C}}
\begin{document}

\title[Milnor-Hamm sphere fibrations]{Milnor-Hamm sphere fibrations and the equivalence problem}

\author{\sc Raimundo N. Ara\'ujo dos Santos}
\address{ICMC,
Universidade de S\~ao Paulo,  Av. Trabalhador S\~ao-Carlense, 400 -
CP Box 668, 13560-970 S\~ao Carlos, S\~ao Paulo,  Brazil}
\email{rnonato@icmc.usp.br, raimundo.icmc@gmail.com}

\author{\sc Maico F. Ribeiro}
\address{Universidade Federal do Espirito Santo,  Av. Fernando Ferrari, 514 - Goiabeiras - CP 29.075-910 Vit\'oria, Espirito Santo, Brazil}
\email{maico.ribeiro@ufes.br}

\author{Mihai Tib\u ar}
\address{Univ. Lille, CNRS, UMR 8524 -- Laboratoire Paul Painlev\'e, F-59000 Lille,
France}  
\email{mihai-marius.tibar@univ-lille.fr}

\subjclass[2010]{32S55, 14D06, 58K05, 57R45, 14P10, 58K15,  32S60}

\keywords{singularities of real analytic maps,  Milnor fibrations, mixed functions}

\thanks{The authors acknowledge the support of USP-COFECUB Uc Ma 163-17.  MT acknowledges
the support of the Labex CEMPI
(ANR-11-LABX-0007-01). RNAS acknowledges the Fapesp grant 2017/20455-3 and the CNPq grant 313780/2017-0.}

\begin{abstract}
 
We introduce the sphere fibration for real map germs  with radial discriminant
and we address the problem of its equivalence with the Milnor-Hamm tube fibration. Under natural 
conditions, we prove the existence  of open book structures with singularities and  solve the equivalence problem.

\end{abstract}

\maketitle
\section{Introduction}

Let $G:(\bR^{m},0) \rightarrow (\bR^{p}, 0)$, $p\ge 2$, be a non-constant real analytic map germ. Under the condition that $G$ has isolated singularity at $0\in \bR^{m}$, it  was shown by Milnor \cite{Mi} that there exists a tube fibration and a sphere fibration. Together they contribute to the definition of a \emph{higher open book structure} as explained in  \cite{dST0, dST1, ACT} in the more general case when the singular set $\Sing G$ is non-isolated but still included in the central fibre $G^{-1}(0)$. Milnor \cite{Mi} construction of  a sphere fibration by the method of blowing away the tube fibration holds under certain conditions (see the discussion in \cite[\S 2]{dST1}), providing a topological equivalence between the empty tube  fibration and the sphere fibration.

  In case of holomorphic functions $f:(\bC^{n},0) \to (\bC, 0)$, Milnor  \cite{Mi} proved  that the sphere fibration  is induced by the special map $f/\| f\|$.  However this particular construction does not extend to real map germs, as already suggested by Milnor in  \cite[\S 11]{Mi}.  There have been 
several successful tries to add up supplementary conditions so that  the map $G/\| G\|$ defines a fibration, e.g. \cite{Ja1, RSV, RA, dST0, dST1, CSS1,  A1} etc. 
In case this fibration exists, there remains the question if it is equivalent  to the empty tube fibration. This  seems not to have been solved even in the case when $G$ has isolated singularity\footnote{The proof claimed in \cite{CSS1} seems to be incomplete as pointed out in \cite{Ha} and \cite{Ri}.}, the setting in which Milnor \cite[\S 9]{Mi} had  already given several related results. We formulate this problem here as the \emph{Equivalence Conjecture \ref{conjec}}.

\medskip

Recently one started to enrich this landscape by treating the case of  a positive dimensional discriminant $\Disc G$.  As one can easily see, the classical case $\Disc G = \{0\}$ remains a very special situation,  which for instance never happens in case of  maps $(\bC^{n}, 0) \to (\bC^{p}, 0)$ with $p>1$ defining \emph{isolated complete intersection singularities}.

The tube fibration for positive dimensional discriminant, predicted in \cite{ACT}, has been introduced recently in \cite{ART} under the name \emph{Milnor-Hamm tube fibration}, and several  new classes of singular map germs with such fibration  have been presented.    This means that over each connected component of the complement of $\Disc G$ there is a well-defined locally trivial fibration, and there  are finitely many such components\footnote{A subanalytic set has locally finitely many connected components, see e.g. \cite{BM}.}.

\medskip

According to Milnor's program \cite{Mi} detailed in \cite{dST1}, there are two more steps in order to define  \emph{open book structures with singularities}.   Our paper is devoted to this task in the most reasonable setting of a \emph{radial discriminant}. We introduce the \emph{Milnor-Hamm sphere fibration}, we give natural sufficient conditions such that this exists, and we exhibit several such classes of singular maps.

We then state the problem of the equivalence with the corresponding Milnor-Hamm empty tube fibration and
we show how to solve it in our general setting under natural supplementary conditions.

 Several conditions for the existence of the Milnor vector field are presented in \cite{AR}, and further details can be found in \cite{Ri}.
  

\section{The singular tube fibration}
\subsection{Nice map germs}\label{ss:tame}

Given a non-constant analytic map germ $G:(\bR^{m},0) \rightarrow (\bR^{p}, 0)$, $m\ge p >0$, the set germ $\Sing G$ is well defined on the source space but the images $G(\Sing G)$ and $\im G$ are in general not well-defined as germs of subanalytic sets, see \cite{ART, JT}.  If they are, then we say that  $G$ is  a \emph{nice map germ}.

A simple example of a non-nice map germ is $(x, xy) : (\bC^{2},0)\to (\bC^{2}, 0)$, for which $\im G$ is not a germ. Under these notations, one has the following results about the existence of nice map germs\footnote{For more refined results concerning \emph{nice map germs} $(f,g)$ and $f\bar g$ in $n$ variables, one may consult the recent works \cite{JT, JT2}.}: 

\noindent \cite[Lemma 2.4]{ART}
\emph{If $\Sing G \cap G^{-1}(0) \subsetneq  G^{-1}(0)$ then $\im G$ contains an open neighbourhood of the origin.
}

\noindent \cite[Theorem 2.7]{ART}
\emph{Let $f, g: (\bC^{n}, 0)\to (\bC, 0)$ be complex polynomials with no common factor of order $\ge 1$. Then $f\bar g : (\bC^{n}, 0)\to (\bC, 0)$ is a nice map germ.}

As introduced in  \cite{ART},  we shall call \emph{discriminant} of a nice map germ $G$ the following set:
\begin{equation}\label{eq:disc}
\Disc G := \overline{G(\Sing G)} \cup \partial \overline{\im G}
\end{equation}
where the boundary $\partial \overline{\im G} := \overline{\im G} \m \i(\im G)$  is a closed subanalytic  proper subset of $\bR^{p}$ and well-defined as a set germ,  where $\i A := \mathring{A}$ denotes the $p$-dimensional interior of a semianalytic set $A\subset \bR^{p}$ (hence it is empty whenever $\dim A <p$), and $\overline{A}$ denotes the closure of it.
 
It follows from the definition that $\Disc G$ is a closed subanalytic set of dimension strictly less than $p$, well-defined as a germ.

\subsection{The Milnor-Hamm fibration}
\begin{definition}\cite[Definition 2.1]{ART}\label{d:tube}
Let $G:(\bR^{m},0) \rightarrow (\bR^{p}, 0)$ be a non-constant analytic  nice map germ.
We say that $G$ has a {\em  Milnor-Hamm tube fibration} if  for any $\e > 0$ small enough, there exists  $0<\eta \ll \e$ such that the restriction:
\begin{equation}\label{eq:tube}
 G_| :  B^{m}_{\e} \cap G^{-1}( B^{p}_\eta \m \Disc G) \to  B^{p}_\eta \m \Disc G
\end{equation}
 is a locally trivial smooth fibration which is independent, up to diffeomorphisms, of the choices of small enough $\e$ and $\eta$. 
 
 We then also say that the restriction of \eqref{eq:tube} over a small enough sphere (still denoted by $S^{p}_\eta$ but keeping in mind that the radius is slightly smaller than the $\eta$ in \eqref{eq:tube}): 
 \begin{equation}\label{eq:emptytube}
 G_| :  B^{m}_{\e} \cap G^{-1}( S^{p}_\eta \m \Disc G) \to  S^{p}_\eta \m \Disc G
\end{equation}
 is a {\em  Milnor-Hamm empty tube fibration}.
 \end{definition}
 
One defines in \cite{ART} a more general notion of stratified tube fibration called \emph{singular Milnor tube fibration} by considering in addition
 all singular fibres over the stratified discriminant. In all cases, the tube fibration is a collection  of  finitely many 
 fibrations over path-connected subanalytic sets.


\subsection{$\rho$-regularity of map germs.}
Let $U \subset \bR^m$ be an open set,  $0\in U$,  and let $\rho:U \to \bR_{\ge 0}$ be the square of the Euclidean distance to $0$. 
We recall the following definition from \cite{ART}:	

\begin{definition}\label{d:ms}
	Let 	$G:(\bR^m, 0) \to (\bR^p,0)$ be a non-constant analytic nice map germ. The set germ at the origin:
	\[M(G):=\overline{\left\lbrace x \in U \mid \rho \not\pitchfork_x G \right\rbrace } \]
	is called the set of \textit{$\rho$-nonregular points} of $G$, or the \emph{Milnor set of $G$}. 
\end{definition}
 
The following inclusion of set germs at the origin will play an important role, where $V_G := G^{-1}(0)$:
\begin{equation}\label{eq:main}
\overline{M(G)\m G^{-1}(\Disc (G))}\cap V_G \subseteq \{0\}.
\end{equation}
Condition  \eqref{eq:main} is a direct extension of the condition used in  \cite{dST0, dST1, Ma, ACT} in case $\Disc (G) =  \{0\}$; it was shown that it is implied by the Thom regularity condition,  in \emph{loc.cit.} and several other papers. The  reciprocal is however  not true,  counterexamples are provided in \cite{Obw, ACT}, see also \cite{Oka4, PT}.

Condition  \eqref{eq:main} enables the  following existence result proved in \cite[Lemma 3.3]{ART}: 

\emph{Let $G:(\bR^m, 0) \to (\bR^p,0)$ be a non-constant analytic nice map germ. If $G$ satisfies condition \eqref{eq:main}, then $G$ has a Milnor-Hamm tube fibration \eqref{eq:tube}.}



\section{The Milnor-Hamm sphere fibration} \label{S-emhsph} 
  
  We introduce a natural condition  under which one may define sphere fibrations whenever $G$ is  a nice map germ  and  $\Disc G$ is positive dimensional.
 \begin{definition}\label{d:radialdisc}
 	Let $G:(\bR^{m},0)\to (\bR^{p},0)$ be a real analytic nice map germ. We say that its discriminant $\Disc G$ is \emph{radial} if it is the germ of a real cone with vertex at the origin (i.e. a union of real half-lines at the origin), or just the origin. 
 \end{definition} 
 
 \begin{example}\label{ex3}
 	Let $f,g:(\mathbb{C}^{n},0)\to (\mathbb{C},0)$ be holomorphic function germs such that $f$ and $g$ do not have any common factor of order $>0$.   Then $f\bar g : (\bC^n,0)\to (\bC, 0)$ is a nice map germ by  \cite[Theorem 2.7]{ART}. If $\Disc (f,g)$ is radial, then obviously $\Disc f\bar g$  is radial, e.g. by \cite[Lemma 2.4]{PT}.
 \end{example}

 \begin{example}\label{ex4}
Let $f:(\bR^m,0) \to (\bR^{p},0)$ be a real analytic nice map germ, and let $g:(\bR,0) \to (\bR,0)$ be an analytic invertible germ, such that $f$ and $g$ are in separate variables.  Then the map germ $G:=(f,g):(\bR^{m}\times \bR,0) \to (\bR^{p}\times \bR,0)$  has $\Sing G = \Sing f \times \bR$ and it is \emph{nice}. If moreover $\Disc f$ is radial, then $\Disc G$ is radial. 
 \end{example}

Let  $G:U\to \bR^{p}$ be a representative of the nice map germ $G$ in some open set $U \ni 0$.  We consider the map
\begin{equation} \label{eq:spherefib}
 {\Psi_G} := \frac{G}{\| G\|}: U \m V_G \to  S_{1}^{p-1}.
 \end{equation}
 

If $G$ is a nice map germ such that $\Disc G$ is \emph{radial}, then it follows from the definitions  that the restriction:  
 	\begin{equation}\label{ssf}    
 	{\Psi_G}_{|} : S_{\e}^{m-1}\m G^{-1}(\Disc G )\to S_{1}^{p-1}\m \Disc G   
 	\end{equation}
is well defined for any $\e >0$ small enough.

\begin{definition} \label{d:spherefib}
We say that  the nice map germ $\displaystyle{G:(\bR^{m},0)\to (\bR^{p},0)}$  with radial discriminant has a 
\textit{Milnor-Hamm sphere fibration}
if the restriction map \eqref{ssf} is a locally trivial smooth fibration which is independent, up to diffeomorphisms, of the choice of $\e>0$ provided it is small enough.

Let $M({\Psi_G})$ be the Milnor set of the map \eqref{eq:spherefib}, i.e. the germ at the origin of the $\rho$-nonregular points of $\Psi_G$, cf Definition \ref{d:ms}.  We  say that  \textit{$\Psi_G$ is $\rho$-regular} if:  
 	\begin{equation}\label{eq:reg}
 	M ({\Psi_G}) \subset G^{-1}(\Disc G).
 	\end{equation}
 \end{definition}
 
 \begin{remark}\label{r:welldef}
 The reason for restricting our definition of Milnor-Hamm sphere fibration to radial discriminants  is precisely that 
 the restriction \eqref{ssf} of the map $\frac{G}{\| G\|}$ is well-defined if and only if  the discriminant is radial.  For non-radial discriminants there is a different way to define singular open books without using the map $\frac{G}{\| G\|}$, see Remark \ref{rem:general} and Conjecture \ref{c:conj}.
\end{remark}

 \subsection{Existence of Milnor-Hamm sphere fibrations.}
The following existence criterion extends the case  $\Disc G =\{ 0\}$ considered in \cite[Theorem 1.3]{ACT}.

 \begin{theorem}\label{sf2} 
  Let $\displaystyle{G:(\bR^{m},0)\to (\bR^{p},0)}$, $m>p\ge2$, be a non-constant  analytic nice map germ with radial discriminant,  satisfying the condition \eqref{eq:main}. 
    If ${\Psi_G}$ is $\rho$-regular then $G$ has a  Milnor-Hamm sphere fibration.

 \end{theorem}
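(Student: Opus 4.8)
The plan is to follow the classical Milnor blowing-up scheme, adapted to the singular/stratified setting via the two hypotheses. The key point is that condition \eqref{eq:main} grants the Milnor-Hamm tube fibration \eqref{eq:emptytube} by \cite[Lemma 3.3]{ART}, while the $\rho$-regularity \eqref{eq:reg} of $\Psi_G$ is exactly what is needed to run the sphere-side transversality argument. I would first fix $\e>0$ small, restrict everything to the ball $B^m_\e$ and to the complement of $G^{-1}(\Disc G)$, and stratify the source by the (finitely many) subanalytic strata coming from $\Disc G$ together with the sphere $S^{m-1}_\e$; over each connected component of $S^{p-1}_1\m\Disc G$ one then works fibrewise.

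The heart of the argument is to construct, on $B^m_\e\m G^{-1}(\Disc G)$, a smooth vector field $v$ with three simultaneous properties: (i) $v$ is tangent to the fibres of $\Psi_G$, i.e. $d\Psi_G(v)=0$; (ii) $\langle v,x\rangle>0$ (it strictly increases the distance-squared $\rho_E$), so that its flow pushes points outward along the spheres; and (iii) $v$ is tangent to each stratum of $G^{-1}(\Disc G)$, so the flow does not leave the allowed region. Property (i)--(ii) can be realised locally precisely because $\Psi_G$ is $\rho$-regular: away from $G^{-1}(\Disc G)$ the level sets of $\Psi_G$ are transverse to all small spheres, so $\ker d\Psi_G$ is not contained in the tangent space to the sphere, and one can pick a local field with positive radial component inside $\ker d\Psi_G$; then patch with a partition of unity (convexity of the cone $\langle\cdot,x\rangle>0$ keeps (ii) after averaging, and $d\Psi_G$ linear keeps (i)). The stratified tangency (iii) is the standard Mather/Bekka control-data construction near $G^{-1}(\Disc G)$; radiality of $\Disc G$ is what makes this compatible with the outward radial direction, since $\Disc G$ is conical and $G^{-1}(\Disc G)$ is then a union of cones. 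Integrating $v$ gives, for $\e'<\e$, a diffeomorphism
\begin{equation}\label{eq:annulus}
B^m_\e\m B^m_{\e'} \cap G^{-1}(S^{p-1}_1\m\Disc G)\ \cong\ S^{m-1}_{\e'}\times[\e',\e]\ \cap\ G^{-1}(S^{p-1}_1\m\Disc G)
\end{equation}
commuting with $\Psi_G$, which shows both that \eqref{ssf} is a locally trivial fibration and that it is independent of the small radius $\e$.

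To finish one must also produce the local trivialisations of \eqref{ssf} over a small ball in each component $C$ of $S^{p-1}_1\m\Disc G$. Here I would combine the outward field above with a lift of a local trivialising field of the tube fibration \eqref{eq:emptytube}: over a contractible $W\subset C$, the Milnor-Hamm tube fibration furnishes controlled horizontal vector fields on $B^m_\e\cap G^{-1}(S^p_\eta\cap W)$; transporting them by the flow of $v$ (which identifies thin shells near $S^{p-1}_1$ with the true tube near $S^p_\eta$ through the homogeneity $\Psi_G(\lambda x)$-type rescaling) yields horizontal fields on $S^{m-1}_\e\cap\Psi_G^{-1}(W)$ whose flow trivialises \eqref{ssf} over $W$. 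Their independence of $\e$ follows from \eqref{eq:annulus}.

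The main obstacle I expect is step (iii) combined with the simultaneous demand (i)--(ii): constructing one vector field that is at once $\Psi_G$-horizontal, strictly $\rho$-increasing, and tangent to every stratum of the (possibly non-isolated, stratified) set $G^{-1}(\Disc G)$ near that set. Away from $G^{-1}(\Disc G)$ the $\rho$-regularity of $\Psi_G$ handles everything cleanly, and far from it the tube fibration does; the delicate region is a neighbourhood of $G^{-1}(\Disc G)$, where one needs the control-data machinery for a Whitney (or Thom) stratification compatible with both $G$ and $\rho_E$, and must check that the conical structure of the radial discriminant makes the outward direction admissible in the controlled tubular neighbourhoods. A secondary technical care is the curve-selection/\L{}ojasiewicz-type estimate ensuring that the flow reaches the sphere $S^{m-1}_\e$ in finite time and that the angle between $\ker d\Psi_G$ and $T_xS^{m-1}_{|x|}$ stays bounded away from $0$ uniformly on the relevant shell, so that the integral curves do not escape to $V_G$ or stall.
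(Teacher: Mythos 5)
Your reading of the division of labour between the two hypotheses is right (condition \eqref{eq:main} gives the tube fibration near $V_G$, $\rho$-regularity of $\Psi_G$ gives transversality away from it), but the execution has a genuine gap. You try to prove the theorem by one global flow argument, constructing a vector field $v$ on $B^m_\e\m G^{-1}(\Disc G)$ tangent to the fibres of $\Psi_G$ with $\langle v,x\rangle>0$. Conditions (i)--(ii) can indeed be patched from $\rho$-regularity, but everything then hinges on controlling the flow near $V_G$: since $v$ preserves the fibres of $\Psi_G$ and, for a radial discriminant, $\overline{\Psi_G^{-1}(y)}\cap G^{-1}(\Disc G)\subseteq V_G$ for $y\notin\Disc G$, the only place an integral curve can degenerate is $V_G$ --- exactly where $\rho$-regularity of $\Psi_G$ gives no information, because $M(\Psi_G)$ is allowed to accumulate anywhere inside $G^{-1}(\Disc G)$. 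Your item (iii) (stratified tangency to $G^{-1}(\Disc G)$) does not address this, and the ``finite time / no stalling'' estimate you flag as secondary is in fact the whole difficulty; nothing in the hypotheses supplies it. Worse, your third paragraph transports trivialising fields of the tube fibration to the sphere ``by the flow of $v$'', which requires the flow to carry the tube boundary $G^{-1}(S^{p-1}_\eta)\cap B^m_\e$ out to the sphere $S^{m-1}_\e$; that is the blow-away property $\langle v,\nabla\|G\|^2\rangle>0$ (condition (c3) of a Milnor vector field), which you never impose. The paper is explicit that producing a field satisfying (c1)--(c3) simultaneously is not known to be possible under these hypotheses even when $\Sing G=\{0\}$: it is precisely the content of the Equivalence Conjecture. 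So your argument, if it worked, would prove strictly more than Theorem \ref{sf2}, and the missing step is exactly the open part. There is also no ``homogeneity $\Psi_G(\lambda x)$-type rescaling'' available for a general germ.

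The paper avoids all of this by never using a global flow. It splits $S^{m-1}_\e\m G^{-1}(\Disc G)$ along the empty tube $\|G\|=\eta$: inside the tube, $\Psi_G=\pi\circ G$ where $G$ restricted to $S^{m-1}_\e\cap G^{-1}(\overline{B}^p_\eta\m\Disc G)$ is already a locally trivial fibration by \cite[Lemma 3.3]{ART} and $\pi=s/\|s\|$ is a trivial fibration onto $S^{p-1}_1\m\Disc G$ precisely because $\Disc G$ is radial; outside the tube, $\rho$-regularity makes $\Psi_G$ restricted to the sphere a proper submersion, hence a locally trivial fibration by Ehresmann. The two pieces agree over the common boundary \eqref{eq18} and glue. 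If you want to keep a flow-based argument, you must either restrict the flow to the region $\|G\|\ge\eta$ (where properness saves you) and handle the inner region by the factorisation above, or prove the existence of a genuine MVF --- which is not available here.
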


 \begin{proof}
The condition \eqref{eq:reg} controls the topology of the map ${\Psi_G}$ on the complementary of a tubular neighbourhood of 
$V_{G}$, while  the condition \eqref{eq:main}
controls the behaviour of the map $\Psi_{G}$ close to $V_{G}$.  Both conditions are essential, as one can see in many examples.

\medskip
\noindent
	\emph{Step 1}. Under the condition \eqref{eq:main},  by \cite[Lemma 3.3]{ART}, the restriction
	\begin{equation}\label{eq16}
	G_{|}: S_{\e}^{m-1}\cap G^{-1}(\overline{B}^{p}_{\eta} \m \Disc G) \to \overline{B}^{p}_{\eta} \m \Disc G
	\end{equation}	
	is a locally trivial  fibration for any small enough $0<\eta\ll \e$. 
	Since $\Disc G$ is radial, for $\pi:=s/\|s\|$ we have that
	\begin{equation}\label{rad:1}
	\pi: \overline{B}^{p}_{\eta} \m \Disc G \to S_{1}^{p-1}\m \Disc G 
	\end{equation}
	is a  locally trivial fibration, and  the equality  $\pi (S_{\eta}^{p-1}\cap\Disc G) = S_{1}^{p-1}\cap \Disc G$.
	 Composing the maps \eqref{eq16} and \eqref{rad:1} one concludes that 
	\begin{equation}\label{eq17}
	\Psi_{G|}: S_{\e}^{m-1}\cap G^{-1}(\overline{B}^{p}_{\eta} \m \Disc G) \to S_{1}^{p-1} \m \Disc G
	\end{equation}	
	is a  locally trivial fibration, and its restriction to the boundary of the empty tube $S_{\e}^{m-1}\cap G^{-1}(S_{\eta}^{p-1} \m \Disc G)$	coincides with the following restriction of $G$:
	 \begin{equation}\label{eq18}
	G_{|}: S_{\e}^{m-1}\cap G^{-1}(S^{p-1}_{\eta} \m \Disc G) \to  S_{\eta}^{p-1} \m \Disc G .
	\end{equation}
	More precisely, in our case of a radial discriminant,  the bases of the fibrations \eqref{eq:emptytube} and  \eqref{eq17} can be identified  with $(\pi_{S^{p-1}_{\eta}\m \Disc G})^{-1} : S_{1}^{p-1}\m \Disc G \to S_{\eta}^{p-1}\m \Disc G$, which is  the multiplication by $\eta$. \\
	
	\noindent
	\emph{Step 2}. 	The condition \eqref{eq:reg}  is equivalent to the fact that the map $\Psi_{G} : S^{m-1}_\e \m G^{-1}(\Disc G)  \to  S_{1}^{p-1}\m \Disc G$ is a submersion (over its image) for any small enough $\e$. Consequently, the restriction
	\begin{equation}\label{eq5}
	\Psi_{G|} : S^{m-1}_\e \m \{G^{-1}(\Disc G) \cup G^{-1}(B^{p}_{\eta})\}  \to  S_{1}^{p-1}\m \Disc G  
	\end{equation}
	is a submersion. It  coincides with the fibration (\ref{eq18}) on $S_{\e}^{m-1}\cap G^{-1}(S^{p-1}_{\eta} \m \Disc (G))$. 
	Moreover, the map \eqref{eq5} is proper  since the restriction $\Psi_{G|}: S^{m-1}_{\e} \m G^{-1}(B^{p}_{\eta})\to S_{1}^{p-1}$ is a proper map, and by using \eqref{ssf}.

Finally, the fibrations (\ref{eq17}) and (\ref{eq5}) may be glued together along the fibration \eqref{eq18}  to induce  the locally trivial smooth fibration 
$$\Psi_{G|} : S_{\e}^{m-1}\m G^{-1}(\Disc G)\to S_{1}^{p-1}\m \Disc G $$
 which is independent of the small enough $\e>0$.
\end{proof}

\begin{remark}\label{rem:general}
One would like to have a more general existence result  for a sphere fibration, namely without the radiality condition, like one can prove for isolated singularities \cite{Mi}  and more generally for a point discriminant \cite[Theorem 2.1]{ACT}. 
In order to do that, we need a blow-away vector field which is tangent to $\Sing G$, 
but one does not ask anymore that the vector field is tangent to the fibres of ${\Psi_G}$ (like we ought to do here, see  \S\ref{s:equiv}).
We conjecture that this can be done for any $\Sing G$:
\end{remark}

\begin{conjecture}\label{c:conj}
 Under the conditions of Theorem \ref{sf2} but without the radiality of the discriminant,  there is a singular  open book structure on the sphere $S^{m-1}_{\e}\m G^{-1}(\Disc G)$,  independent of the small enough ${\e}$.
\end{conjecture}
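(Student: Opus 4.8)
The plan is to keep the two-region architecture of the proof of Theorem~\ref{sf2} and to replace the single place where the radiality of $\Disc G$ was used --- the composition of the tube fibration with the linear projection $\pi=s/\|s\|$ of \eqref{rad:1} --- by a \emph{blow-away vector field} built in the source. Everything else is already in place: by \eqref{eq:main} and \cite[Lemma 3.3]{ART} one has, for $0<\eta\ll\e$, the Milnor-Hamm tube fibration over $B^p_\eta\m\Disc G$ and the fibration \eqref{eq16} over the spherical tube, restricting to the empty-tube fibration \eqref{eq18} over the corner $S^{m-1}_\e\cap G^{-1}(S^{p-1}_\eta\m\Disc G)$; and by the $\rho$-regularity \eqref{eq:reg} the map $\Psi_G$ is a submersion on the whole of $S^{m-1}_\e\m G^{-1}(\Disc G)$, in particular on the outer piece $S^{m-1}_\e\m\{G^{-1}(\Disc G)\cup G^{-1}(B^p_\eta)\}$, cf. \eqref{eq5}.

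The heart of the matter is to produce, on the collar $\cN:=\overline{B}^m_\e\cap G^{-1}(\overline{B}^p_\eta\m\Disc G)$, a vector field $w$, nonsingular off $G^{-1}(\Disc G)$, that is: (a) \emph{outward}, $\langle w,\grad\rho_E\rangle>0$, so that its flow crosses every sphere $S^{m-1}_{\e'}$, $\e'\le\e$, transversally and carries the ``side'' $B^m_\e\cap G^{-1}(S^{p-1}_\eta\m\Disc G)$ of $\cN$ onto the ``lid'' $S^{m-1}_\e\cap G^{-1}(\overline{B}^p_\eta)$; (b) \emph{tangent to $\Sing G$}, and more generally to each stratum of a fixed Whitney stratification of $G^{-1}(\Disc G)\cap\cN$ having $\Sing G$ as a union of strata and transverse to the spheres $S^{m-1}_{\e'}$, so that its flow preserves that stratified picture and takes the singular locus of the side to the singular locus of the lid; (c) \emph{controlled}, in the sense that $dG(w)$ is tangent to the strata of $\Disc G$ and bounded below, away from them, by a \L{}ojasiewicz inequality with a uniform exponent, so that the trajectories of $w$ never reach $G^{-1}(\Disc G)$ and attain $S^{m-1}_\e$ in uniformly bounded time. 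We do \emph{not} require $w$ to be tangent to the fibres of $\Psi_G$ --- that stronger demand belongs to the equivalence problem. Integrating $w$ yields a stratified, $G$-compatible diffeomorphism from a neighbourhood of the side onto a neighbourhood of the lid, through which the empty-tube fibration \eqref{eq18} is transported to the lid and then glued, along the corner, to the outer part \eqref{eq5}; the outcome is a locally trivial fibration on $S^{m-1}_\e\m G^{-1}(\Disc G)$ reproducing, along $S^{m-1}_\e\cap G^{-1}(\Disc G)$, the pages and the local normal form of the Milnor-Hamm model, that is, the announced singular open book structure. Its independence of small $\e$ follows, exactly as in Theorem~\ref{sf2}, by running the same flows across the shells $\e_1\le\rho_E\le\e_2$.

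To build $w$ one works locally and patches. Away from $G^{-1}(\Disc G)$, where by \eqref{eq:reg} the field $\grad\rho_E$ is already transverse to the pages and $\Sing G$ is absent, the usual correction of $\grad\rho_E$ will do. Near a stratum $X\subset G^{-1}(\Disc G)$ one uses the transversality encoded in \eqref{eq:main} --- the defining property of the Milnor set of $G$ --- to produce, in coordinates adapted to $X$, a local outward field whose $dG$-image is tangent to the corresponding stratum of $\Disc G$ and of Kuo--\L{}ojasiewicz controlled type. The conditions (a) and (b) are convex and the estimate (c) survives passage to a common \L{}ojasiewicz exponent, so these local fields can be averaged over the overlaps with a subordinate partition of unity; compatibility of the stratification with the frontier condition makes a field tangent to a small stratum automatically tangent to the larger strata incident to it.

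The main obstacle is exactly the simultaneous validity of (a), (b) and (c) along $G^{-1}(\Disc G)$. When $\Disc G$ is radial the radial segments of the target meet $\Disc G$ at angles bounded away from tangency with the spheres $S^{p-1}_t$, so a purely radial target field is at once tangent to $\Disc G$ and uniformly transverse to the $S^{p-1}_t$; this is what let $\pi$ do the work for free in Theorem~\ref{sf2}. Without radiality a target field tangent to $\Disc G$ must bend, and it is unclear that it lifts to an outward field on the shells that is also tangent to $\Sing G$ and controlled. What one really needs is a \L{}ojasiewicz-type inequality relating $\rho_E$, $\|G\|$ and the distance to $\Sing G$ (or to $G^{-1}(\Disc G)$), holding uniformly over all strata and over small $\e$, in the spirit of the estimates behind \eqref{eq:main}; establishing such an inequality --- or finding a way around it --- is where I expect the genuine difficulty to lie.
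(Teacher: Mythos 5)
There is a fundamental point to make first: the statement you are proving is stated in the paper as a \emph{conjecture}, with no proof; the remark immediately preceding it already identifies the missing ingredient as ``a blow-away vector field which is tangent to $\Sing G$''. Your proposal reproduces exactly this diagnosis but does not close the gap. The decisive step --- the construction, near each stratum of $G^{-1}(\Disc G)$, of a local outward field satisfying your (a), (b) and (c) simultaneously --- is asserted to follow from ``the transversality encoded in \eqref{eq:main}'', but \eqref{eq:main} is a condition on the Milnor set of $G$ near $V_G$ only: it controls transversality of $\rho$ to the fibres of $G$ away from $G^{-1}(\Disc G)$ and provides no lift of a target field tangent to $\Disc G$ that is at once outward, tangent to $\Sing G$, and \L{}ojasiewicz-controlled. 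The estimate in (c), which is what would guarantee that trajectories neither fall into $G^{-1}(\Disc G)$ nor take unbounded time to reach $S^{m-1}_\e$, is nowhere established, and you say so yourself in your last paragraph. A program whose crucial inequality is flagged as ``where I expect the genuine difficulty to lie'' is a restatement of the conjecture, not a proof of it.

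There is also a structural problem with your outer region. The map \eqref{ssf}, and hence the submersion \eqref{eq5} onto $S^{p-1}_1 \m \Disc G$, is only well defined \emph{because} $\Disc G$ is radial: for $x \notin G^{-1}(\Disc G)$ one needs $G(x)/\|G(x)\| \notin \Disc G$, which fails in general once $\Disc G$ is not a union of rays. So without radiality you cannot simply quote \eqref{eq5} for the piece $S^{m-1}_\e \m \{G^{-1}(\Disc G) \cup G^{-1}(B^p_\eta)\}$, nor glue it to the transported empty-tube fibration \eqref{eq18} over a common base: the identification of $S^{p-1}_\eta \m \Disc G$ with $S^{p-1}_1 \m \Disc G$ by $\pi$ is precisely the other place where radiality enters Theorem \ref{sf2}. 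Any correct approach must first say what the base of the conjectured singular open book is (presumably $S^{p-1}_\eta \m \Disc G$ throughout, abandoning $\Psi_G$ on the outer region as well), and then produce the controlled blow-away field; neither is done here.
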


 \medskip
 
 
 

  \begin{example}\label{e1}
  	Let $G: \bR^n \to \bR^2$,  $G(x_1,\ldots,x_n)=(x_1,x_{2}^{2}+\cdots+x_{n-1}^{2} - x_{n}^{2})$.  One has $V_G =\{x_1=0\} \cap \{x_{2}^{2}+\cdots+x_{n-1}^{2} - x_{n}^{2}=0\}$ and  $\Sing G = \{x_{2}=\cdots= x_{n}=0\}$, thus $\Disc G = \bR \times \{0\}$ is radial, and the map germ $G$ is \emph{nice} since it verifies \cite[Lemma 2.4]{ART} quoted in \S \ref{ss:tame}. See also Example \ref{ex4} for an alternate argument.
	
	 Since $M(G)\cap V_G \subset \Sing G \cap V_G = \{0\}$,  the condition \eqref{eq:main} is satisfied. By straightforward computations one gets that $M(\Psi)=\Sing G$, thus  $M(\Psi)\m G^{-1}(\Disc (G))=\emptyset$, thus $\Psi$ is $\rho$-regular.  By Theorem \ref{sf2} it follows that $G$ has a Milnor-Hamm sphere fibration. 
  \end{example}
  

\section{Equivalence of Milnor-Hamm tube and sphere fibrations}\label{s:equiv}

Let $G:(\bR^{m},0)\to (\bR^{p},0)$, $m>p\geq 2$,  be an analytic  nice map germ.  

\begin{problem}[The Equivalence Problem]\label{q:equiv}
Assuming that the Milnor-Hamm tube fibration and sphere fibration exist, under what conditions are they equivalent, 
in the sense that the fibrations \eqref{eq:emptytube} and \eqref{ssf} are equivalent?
\end{problem}

The equivalence problem \ref{q:equiv} makes sense in the case of a radial discriminant since the bases of the fibrations \eqref{eq:emptytube} and  \eqref{ssf} can be identified  
  via multiplication by $\eta$, as we have explained  in last part of Step 1 of the proof of Theorem \ref{sf2}.

Milnor \cite{Mi} introduced the method of ``blowing away the tube to the sphere'' which uses integration of a special vector field in order to prove the equivalence of these two fibrations in the case of non-constant holomorphic function germs $(\bC^{n}, 0) \to (\bC, 0)$.  As pointed out by Milnor \cite{Mi} and explained in all details in \cite[\S 2]{ACT}, 
this method may be applied  in the real setting under certain conditions, still referring to a point-discriminant.
We now formulate  the properties of such a  vector field in the more general context of a \emph{radial discriminant} $\Disc G$:

\begin{definition}\label{def-gvf}
One calls  \textit{Milnor vector field} for $G$ a vector field $\nu$ which satisfies the following conditions for any $x\in B^{m}_{\e} \m G^{-1}(\Disc G)$:
\begin{enumerate}
		\item [(c1)] $\nu(x)$ is tangent to the fibre  $\Psi_{G}^{-1}(\Psi_G(x))$,
		\item [(c2)] $\left\langle \nu (x), \nabla \rho(x) \right\rangle >0$, 
		\item [(c3)] $\left\langle \nu (x), \nabla \|G(x)\|^{2} \right\rangle >0$.
	\end{enumerate}
\end{definition}

We then have  the following general equivalence theorem:

\begin{theorem}\label{tm} 
	Let  $G:(\bR^{m},0)\to (\bR^{p},0)$, $m>p\geq 2$ be an analytic  nice map germ with  radial discriminant, such that  the Milnor-Hamm tube fibration \eqref{eq:tube} and sphere fibration \eqref{ssf} do exist.
	 If  there is a Milnor vector field  for $G$, then the fibrations \eqref{eq:emptytube} and \eqref{ssf} are equivalent.
\end{theorem}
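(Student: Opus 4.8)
The strategy follows Milnor's classical ``blowing away the tube'' argument, adapted to the radial-discriminant setting. The idea is to use the flow of the Milnor vector field $\nu$ to push the empty tube $S^{m-1}_{\e}\cap G^{-1}(S^{p-1}_{\eta}\m\Disc G)$ outward onto the portion of the sphere $S^{m-1}_{\e}\m G^{-1}(\Disc G)$ lying outside a small tube around $V_G$, and then to interpolate the remaining collar. First I would fix small $\e\gg\eta>0$ as in the definitions, so that both fibrations \eqref{eq:emptytube} and \eqref{ssf} exist over these radii. Condition (c3) guarantees that along the integral curves of $\nu$ the quantity $\|G\|^{2}$ is strictly increasing, so starting from a point of the empty tube (where $\|G\|=\eta$) the trajectory moves into the region where $\|G\|>\eta$; condition (c1) guarantees that $\Psi_G$ is constant along each such trajectory, hence the flow maps fibres of the tube fibration to fibres of the candidate sphere fibration; and condition (c2) — increase of $\rho$ — is what is needed so that, after suitably normalising the flow parameter, one stays on (or can be retracted back to) the sphere $S^{m-1}_{\e}$. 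The precise device is to rescale $\nu$ so that its flow preserves $S^{m-1}_{\e}$: since $\langle\nu,\nabla\rho\rangle>0$ one can project $\nu$ onto the tangent space of the sphere and the resulting vector field $\tilde\nu$ still satisfies (c1) and (c3), the latter because the radial component we subtracted is a positive multiple of $\nabla\rho$ and, on the sphere, moving radially does not change $\|G\|$ to first order in the relevant sense — here one must be a little careful and instead argue directly that along $\tilde\nu$ the function $\|G\|^{2}$ restricted to $S^{m-1}_{\e}$ is still strictly increasing, which follows by decomposing $\nabla\|G\|^2$ into its sphere-tangential and radial parts and using (c2)--(c3).

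Next I would make the identification of bases explicit. As noted in Step 1 of the proof of Theorem \ref{sf2}, because $\Disc G$ is radial the base $S^{p-1}_{\eta}\m\Disc G$ of \eqref{eq:emptytube} is identified with $S^{p-1}_{1}\m\Disc G$, the base of \eqref{ssf}, via multiplication by $\eta$ (equivalently via $\pi=s/\|s\|$). So it suffices to build a diffeomorphism $\Phi$ of total spaces over the identity of $S^{p-1}_{1}\m\Disc G$, i.e. with $\Psi_G\circ\Phi = \frac{1}{\eta}G$ on the empty tube. Define $\Phi$ by flowing along $\tilde\nu$: for $x$ in the empty tube, let $\Phi(x)$ be the point obtained by following the trajectory of $\tilde\nu$ through $x$ until it reaches the ``outer region'' $\{y\in S^{m-1}_{\e} : \|G(y)\|\geq \eta\}\m G^{-1}(\Disc G)$ in a prescribed way — concretely, reparametrise each trajectory by its $\|G\|$-value, which is legitimate by the strict monotonicity just established, and stop at a fixed reference value, or better, at the natural ``end'' of the trajectory. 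Since $\Psi_G$ is a submersion on the outer region (Step 2 of Theorem \ref{sf2}'s proof, using $\rho$-regularity) and $G$ is the tube fibration on the empty tube, and since $\tilde\nu$ carries one to the other preserving $\Psi_G$, the map $\Phi$ is a diffeomorphism onto its image compatible with the two projections. One then checks that the image of $\Phi$ is a closed collar of the empty-tube-boundary inside $S^{m-1}_{\e}\m G^{-1}(\Disc G)$, and that the complement deformation-retracts onto it along the same flow; gluing $\Phi$ with this retraction (which is fibre-preserving because it is built from $\tilde\nu$, which satisfies (c1)) produces the desired fibre-preserving diffeomorphism between \eqref{eq:emptytube} and \eqref{ssf}.

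The main obstacle is the behaviour of the flow \emph{near $V_G$ and near $G^{-1}(\Disc G)$}: the vector field $\nu$ is only defined on $B^{m}_{\e}\m G^{-1}(\Disc G)$, and a priori its trajectories might spiral, take infinite time, or escape towards $G^{-1}(\Disc G)$ rather than towards larger $\|G\|$, so one must verify that every trajectory starting on the empty tube does reach the outer region in finite (reparametrised) ``time'' and stays inside $S^{m-1}_{\e}\m G^{-1}(\Disc G)$. This is exactly where conditions (c2) and (c3) are used quantitatively: (c3) forbids the trajectory from returning to smaller values of $\|G\|$, hence it cannot approach $V_G=\{\|G\|=0\}$, while properness of $\Psi_{G|}$ restricted away from a neighbourhood of $V_G$ (established in Step 2 of the proof of Theorem \ref{sf2}) confines the trajectory to a compact set on which $\nabla\|G\|^2$ is bounded away from $0$ along $\tilde\nu$, giving a uniform lower bound on the speed of increase of $\|G\|$ and hence finite traversal time. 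A secondary technical point is the smooth interpolation/gluing near the corner $S^{m-1}_{\e}\cap G^{-1}(S^{p-1}_{\eta}\m\Disc G)$, where one must make the collar construction and the retraction agree to all orders; this is routine once a suitable bump-function reparametrisation is inserted, exactly as in \cite[\S 2]{ACT}. I would carry out the argument first for a single connected component of $S^{p-1}_{1}\m\Disc G$ and then remark that, there being finitely many such components (subanalyticity), the constructions patch.
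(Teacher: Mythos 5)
Your overall strategy (Milnor's blow-away, the identification of the bases via multiplication by $\eta$ using radiality of $\Disc G$, and the worry about trajectories escaping towards $G^{-1}(\Disc G)$) matches the paper's, and your reading of the roles of (c1) and (c3) is correct. But the ``precise device'' at the heart of your construction --- replacing $\nu$ by its orthogonal projection $\tilde\nu$ onto $T_xS^{m-1}_{\e}$ and flowing \emph{inside the sphere} --- is where the proof breaks, for two reasons. First, the projection does not preserve (c1): $\tilde\nu=\nu-\frac{\langle\nu,\nabla\rho\rangle}{\|\nabla\rho\|^{2}}\nabla\rho$, and since $\nabla\rho$ is in general not tangent to the fibre $\Psi_G^{-1}(\Psi_G(x))$, the vector $\tilde\nu(x)$ need not be tangent to it either; likewise your claim that $\|G\|^{2}$ still increases along $\tilde\nu$ does not follow from (c2)--(c3), because the subtracted radial term can dominate $\langle\nabla\rho,\nabla\|G\|^{2}\rangle$ with either sign. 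Second, and more fundamentally, a flow confined to $S^{m-1}_{\e}$ cannot establish the theorem: the total space of the empty tube fibration \eqref{eq:emptytube} is $B^{m}_{\e}\cap G^{-1}(S^{p-1}_{\eta}\m\Disc G)$, which lies in the \emph{interior} of the ball (you have written $S^{m-1}_{\e}\cap G^{-1}(S^{p-1}_{\eta}\m\Disc G)$ for it, which is the lower-dimensional ``corner''). An isotopy inside the sphere can at best identify the sphere fibration \eqref{ssf} with the fibration \eqref{eq18} on that corner, whose fibre is the \emph{boundary} of the Milnor fibre; it can never produce the Milnor fibre itself, so the resulting ``equivalence'' would be with the wrong fibration.

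The correct use of (c2) is the opposite of what you propose: one keeps $\nu$ unprojected precisely so that $\rho$ is strictly increasing along its trajectories, and one defines the blow-away map by starting at a point $x$ of the solid tube $B^{m}_{\e}\cap G^{-1}(S^{p-1}_{\eta}\m\Disc G)$ and following the flow of $\nu$ through the interior of the ball until it first meets $S^{m-1}_{\e}$; (c3) guarantees the image lands in $\{\|G\|\geq\eta\}$ and that the inverse (backward flow from the sphere) terminates on the tube, (c1) makes the map fibre-preserving, and finite traversal time comes from a compactness argument on $\Psi_G^{-1}(y)\cap\overline{B^{m}_{\e}}\cap\{\eta\le\|G\|\le C\}$ --- a set disjoint from $G^{-1}(\Disc G)$ exactly because the trajectory stays in a single $\Psi_G$-fibre and $\Disc G$ is radial (this is the one genuinely new point in the paper's proof relative to the $\Disc G=\{0\}$ case treated in \cite[\S 2]{ACT}). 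The remaining inner region $S^{m-1}_{\e}\cap G^{-1}(\overline{B}^{p}_{\eta}\m\Disc G)$ is handled not by a flow but by the tube-fibration structure \eqref{eq17} itself, which exhibits it as a fibrewise collar of the corner. I would redo the construction along these lines; the rest of your discussion (properness, patching over the finitely many components of $S^{p-1}_{1}\m\Disc G$) can then be kept.
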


 \begin{proof}
 The proof follows Milnor's pattern \cite{Mi} explained in detail in \cite{ACT}. The Milnor vector field is by definition radial pointing to the exterior of the spheres. It is well-defined and non-zero by definition outside the inverse image of the  discriminant, and the excepted set $\Disc G$ is itself radial by assumption (Definition \ref{d:radialdisc}), thus it is a collection of radii in the target. Therefore the Milnor vector field produces a flow on $B^{m}_{\e} \m G^{-1}(\Disc G)$, the projection  by $G$ of which is radial in the target. 
 
 Let us remark here that the flow does not cross the set $G^{-1}(\Disc G)$ precisely because the vector field is tangent to 
 the fibres of $\Psi_{G}$ (i.e. condition (c1)) and that the discriminant is radial.  This flow yields an 
  isotopy from the Milnor-Hamm tube fibration  to the Milnor-Hamm sphere fibration.  
  \end{proof}	

  In the real setting, the  Milnor vector field existence problem appeared first in case $\Sing G = \{0\}$ in 
 Jacquemart's work \cite{Ja1}
 and later in \cite{A1,  Oka3} etc. The more general case $\Disc  G =\{0\}$ has been addressed later  in e.g. \cite{dST0, dST1, A1, ACT, Ha, Oka4}. The authors produced sufficient conditions in each setting, but
 the  existence of a Milnor vector field without conditions seems to be an open problem\footnote{The  existence proof of  \cite[Lemma 5.2]{CSS1} has been reported as being incomplete, see  \cite[iv]{Ha} and \cite[\S 7.3]{Ri}.}.
\begin{econjecture}\label{conjec}
	Let $G:(\bR^{m},0)\to (\bR^{p},0)$, $m>p\geq 2$ be an analytic nice map germ with  radial discriminant and such that ${\Psi_G}$ is $\rho$-regular. If both fibrations \eqref{eq:tube}  and \eqref{ssf} exist, then the fibrations \eqref{eq:emptytube} and \eqref{ssf} are equivalent.
\end{econjecture}






\medskip
Let us remind that the complement   $S_{\eta}^{p-1}\m \Disc G$ may have several connected components and thus two types of fibrations over each such component. 

\begin{definition}\label{d:fibre-eq}
 We say that the fibrations \eqref{eq:emptytube} and  \eqref{ssf} are \emph{fibre-equivalent} if the corresponding fibres over each connected component of  $S_{\eta}^{p-1}\m \Disc G$ are isotopic.
\end{definition}

We show here that Conjecture \ref{conjec}
 is true at the level of fibres.

 \begin{theorem}\label{eqt1}
 	Let	$G = (G_{1}, \ldots , G_{p}): (\bR^m,0)\to (\bR^p,0)$ be an analytic nice map germ with  radial discriminant  such that $\Psi_{G}$ is $\rho$-regular. If the Milnor-Hamm fibrations \eqref{eq:tube} and \eqref{ssf} exist,  then:
	  \begin{enumerate}
  \rm \item \it   the fibrations \eqref{eq:emptytube} and \eqref{ssf} are fibre-equivalent, 
  \rm \item \it  the fibrations \eqref{eq:emptytube} and \eqref{ssf} are equivalent over any contractible component of $S^{p-1}\m \Disc G$.
   \end{enumerate}
 \end{theorem}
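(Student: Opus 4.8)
The plan is to reduce the fibre-equivalence statement to the case of a point discriminant treated in \cite{ACT}, by working over a single connected component $\cC$ of $S^{p-1}_\eta\m\Disc G$ at a time. First I would fix such a component $\cC$ and pick a point $a\in\cC$; by the existence of the Milnor-Hamm tube fibration \eqref{eq:emptytube}, the fibre of \eqref{eq:emptytube} over $a$ is $F^{\rm tube}_a := B^m_\e\cap G^{-1}(a)$, and by \eqref{ssf} (via the identification of bases by multiplication by $\eta$ explained at the end of Step 1 of the proof of Theorem \ref{sf2}) the fibre of the sphere fibration over the corresponding point $a/\|a\|$ is $F^{\rm sph}_a := S^{m-1}_\e\cap G^{-1}(\bR_{>0}\cdot a)$, the intersection of the sphere with the open ray through $a$. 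The key point is that the latter is exactly the Milnor fibre, over the ray $\bR_{>0}\cdot a$, of the function-like situation obtained by restricting attention to that ray; here the two conditions we are given --- condition \eqref{eq:main} controlling $M(G)$ near $V_G$, and the $\rho$-regularity of $\Psi_G$, i.e. \eqref{eq:reg}, controlling $\Psi_G$ away from $V_G$ --- are precisely what is needed to run Milnor's ``blowing up the tube to the sphere'' argument \emph{restricted to a single ray}, exactly as in \cite[\S 2]{ACT} for the point-discriminant case.

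More precisely, for part (a) I would argue as follows. Over the ray $L_a := \bR_{>0}\cdot a$ the relevant pieces are: the restricted tube fibration $G_|: S^{m-1}_\e\cap G^{-1}(\overline B^p_\eta\cap L_a\m\{0\})\to \overline B^p_\eta\cap L_a\m\{0\}$, which is trivial over the interval $L_a\cap \overline B^p_\eta\m\{0\}$ by the local triviality established in Step 1 of Theorem \ref{sf2}; the submersivity of $\Psi_G$ on $S^{m-1}_\e\m\{G^{-1}(\Disc G)\cup G^{-1}(B^p_\eta)\}$ from Step 2 of that proof, which when restricted over $L_a$ gives a proper submersion onto $L_a\cap(S^{p-1}_1\m\Disc G)$, hence a trivial fibration there; and the gluing along $S^{m-1}_\e\cap G^{-1}(S^{p-1}_\eta\cap L_a)$. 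The composition then gives a trivial fibration over the whole segment of $L_a$ from radius $\eta$ outward, whose fibre at radius $\eta$ is $F^{\rm tube}_a$ and whose fibre at the outer end is $F^{\rm sph}_a$; triviality forces $F^{\rm tube}_a\cong F^{\rm sph}_a$ by an isotopy inside $S^{m-1}_\e$. Doing this for a representative $a$ in each of the finitely many components of $S^{p-1}_\eta\m\Disc G$ yields fibre-equivalence.

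For part (b), suppose $\cC$ is contractible. Then both fibrations \eqref{eq:emptytube} and \eqref{ssf} restricted over $\cC$ (resp. over the corresponding component $\cC/\|\cC\|$ of $S^{p-1}_1\m\Disc G$, identified with $\cC$ by multiplication by $\eta$) are locally trivial fibrations over a contractible base, hence globally trivial, so each is determined up to equivalence by its fibre together with the (trivial) monodromy. The equivalence $F^{\rm tube}_a\cong F^{\rm sph}_a$ from part (a), extended by the product structure over the contractible $\cC$, then upgrades to an equivalence of the two fibrations over $\cC$; concretely one transports the fibrewise isotopy of part (a) over $\cC$ using the trivialisations, which is legitimate precisely because contractibility kills any obstruction coming from differing monodromy representations $\pi_1(\cC)\to$ (mapping class group of the fibre). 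I expect the main obstacle to be making rigorous the claim that the restrictions of the tube and sphere fibrations over the ray (or the contractible component) can be spliced along the common boundary $S^{m-1}_\e\cap G^{-1}(S^{p-1}_\eta\cap L_a)$ into a \emph{single} locally trivial fibration --- i.e. the careful handling of the collar/corner at the boundary of the empty tube, exactly the delicate gluing step in \cite[\S 2]{ACT}; everything else is a component-by-component bookkeeping of results already in Theorem \ref{sf2} and \cite{ACT, ART}.
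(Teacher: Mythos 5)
Your identification of the two fibres is correct, and your overall strategy (work fibre by fibre and blow the tube fibre out to the sphere fibre along a single ray of the target) is indeed the route the paper takes. But your elaboration of part (a) has a genuine gap, and it locates the difficulty in the wrong place. The fibration \eqref{eq16} restricted over the interval $L_a\cap \overline{B}^p_\eta\m\{0\}$ has fibres $S^{m-1}_\e\cap G^{-1}(ta/\|a\|)$, of dimension $m-p-1$: these are the \emph{boundaries} of the tube fibres, not the tube fibres $F^{\mathrm{tube}}_a=B^m_\e\cap G^{-1}(a)$ themselves; and $L_a\cap(S^{p-1}_1\m\Disc G)$ is a single point, so ``restricting the submersion $\Psi_G$ over $L_a$'' produces one fibre, not a fibration over a segment. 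Hence there is no ``trivial fibration over the segment of $L_a$ from radius $\eta$ outward whose fibre at one end is $F^{\mathrm{tube}}_a$ and at the other is $F^{\mathrm{sph}}_a$''. The two sets you must compare are level sets of two \emph{different} functions, $\|G\|$ and $\rho$, on the $(m-p+1)$-manifold $X_y:=\Psi_G^{-1}(y)\cap B^m_\e$; knowing that each function separately has no critical points on $X_y\m V_G$ (which is what $\rho$-regularity and $y\notin\Disc G$ give) does not imply that their level sets are isotopic, any more than a compact cobordism with non-critical boundary-defining functions is automatically a product.

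What is actually needed --- and what the paper supplies --- is a vector field on $X_y\cap(B^m_\e\m V_G)$ along which $\rho$ and $\|G\|^2$ increase \emph{simultaneously}, i.e.\ a Milnor vector field in the sense of Definition \ref{def-gvf} for the restriction to $X_y$; integrating it is what carries $F^{\mathrm{tube}}_a$ isotopically onto $F^{\mathrm{sph}}_a$ via Theorem \ref{tm}. Its existence is not automatic: one must show that the projections $v_1=\proj_{T_xX_y}(\nabla\|G(x)\|^2)$ and $v_2=\proj_{T_xX_y}(\nabla\rho(x))$ are both nonzero (this is exactly where the $\rho$-regularity of $\Psi_G$ enters, giving $v_2\neq 0$) \emph{and never point in opposite directions}; the paper obtains the latter from Milnor's Curve Selection Lemma statement (Lemma \ref{l:milnor}) applied to $X_y$ with $f=\|G\|^2$ and $g=\rho$, and only then is the bisector field $v_1/\|v_1\|+v_2/\|v_2\|$ nonvanishing on $X_y\cap B^m_\e$. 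This curve-selection step is the crux of the proof and is entirely absent from your proposal, while the collar/gluing issue you single out as ``the main obstacle'' is comparatively harmless. Your treatment of part (b) as a formal consequence of (a) over a contractible base is fine (the paper dismisses it in one line), but (a) itself requires the vector-field construction.
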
  

\begin{proof} [Proof of Theorem  \ref{eqt1}] 
Since (b) is a simple consequence of (a), we stick to the proof of (a).
 For any vector field $\omega$ on $B_{\e}^{m} \m G^{-1}(\Disc G)$, we denote by $\proj_{T}(\omega(x))$ the orthogonal projection of $\omega(x)$ to some linear subspace $T\subset T_{x}B_{\e}^{m}= \bR^{m}$. 
  Milnor \cite{Mi} proved  the following result by using the Curve Selection Lemma, see also \cite[Lemma 1, p. 343]{Io}.
 
 \begin{lemma}\label{l:milnor} \cite{Mi}
 Let $X\subset {\bR}^m$ be an analytic manifold such that $0\in \overline{X}$. Let $f$ and $g$ be  
 analytic functions on ${\bR}^m$  such that $f(0)=g(0)=0$ and  that $f_{|X\m \{0\}} > 0$ and $g_{|X\m \{0\}} > 0$. Then there exists $\e >0$ such that for all $x \in X \cap B^{m}_{\e}$ the vectors  $\proj_{T_{x}X}(\nabla f(x))$ and  $\proj_{T_{x}X}(\nabla g(x))$ cannot have opposite direction whenever both are non-zero.
 \fin
 \end{lemma}
 
 We apply this lemma to the following situation. 
 Let $X_{y}:=\Psi_{G}^{-1}(y)$ be the fibre over some fixed value $y\in S_{1}^{p-1}\m \Disc G$. We may assume, without loss of generality, that $x$ belongs to the open set $ \{G_1(x) \neq 0\}$. By  \cite[Section 2]{dST0} and \cite[Proof of Theorem 2.2]{dST1}, the normal space $N_{x}X_{y}$ of the fibre $X_{y}$ in $\bR^{m}$  is spanned by $ \{\Omega_{2}(x),\ldots, \Omega_{p}(x)\}$ where $ \Omega_k = G_1\nabla G_k - G_k \nabla G_1, $ for $ k=2,\ldots, p$.

Consider the following vector fields on $B_{\e}^{m} \m G^{-1}(\Disc G)$: 
\[ \begin{array}{l}
v_{1}(x):=\proj_{T_{x}X_{y}}(\nabla \|G(x)\|^{2})  \\  v_{2}(x):=\proj_{T_{x}X_{y}}(\nabla \rho(x)).
\end{array}
\]

The vector field $v_{1}$ has no zeros  since the tube $\|G(x)\|^{2}= const.$ is transversal to $X_{y}$, for  $y\not\in \Disc G$.  The second vector field $v_{2}$ has no zeros on $B_{\e}^{m}\setminus G^{-1}(\Disc G)$ if and only if $M(\Psi_G)\m G^{-1}(\Disc G)=\emptyset$, i.e. $\Psi_{G}$ is $\rho$-regular, which is our assumption. 

 By Lemma \ref{l:milnor}, there is $\e >0$ such that for any $x \in X_y \cap B_\e $ either the vectors $v_{1}(x)$ and $v_{2}(x)$ are linearly independent, or they are linearly dependent but cannot have opposite direction.

Let $\e_0 >0$ such that both fibrations (\ref{eq:tube}) and (\ref{ssf}) exist, for any $0<\e < \e_0$ and $0<\eta\ll \e \leq \e_{0}$.  Let  us fix some $y \in S_{1}^{p-1}\m \Disc G$.
 
 By the above Lemma and discussion, there is  $0<\e < \e_0$  such that $ v_1(x) $ and $ v_2(x) $ cannot have strictly opposite direction  on $B_{\e}^{m}\cap X_y$. Using Milnor's original idea, we may consider the bisector vector field:  
\begin{equation}\label{eqe7}
\nu(x)= \displaystyle{\frac{v_{1}(x)}{\|v_{1}(x)\|}+\frac{v_{2}(x)}{\|v_{2}(x)\|}}
\end{equation}
 defined on $B_{\e}^{m}\setminus G^{-1}(\Disc G)$. It
 has no zeros on $X_y \cap (B_{\e}^{m} \m G^{-1}(\Disc G))$ precisely because $v_{1}$ and $v_{2}$ do not point in opposite directions. This is thus a Milnor vector field on $B_{\e}^{m}\cap X_y$ and we may now apply Theorem \ref{tm} to the
 restriction of $G$ to the space $B_{\e}^{m}\cap X_y$ in order to prove the isotopy of the fibres over $y$ of the two fibrations. 
 
 Varying the point $y\in S_{1}^{p-1}\m \Disc G$ we get the isotopy of the corresponding fibres over any connected component.
 \end{proof}


\subsection{The importance of the Milnor set $M(G)$}
 
We  tacitly assume throughout this subsection that $G$ is an  analytic nice map with radial discriminant and that the Milnor-Hamm fibrations \eqref{eq:tube} and \eqref{ssf} exist.
Let us show that the obstruction to the existence of a Milnor vector field  is the Milnor set.
\begin{lemma}\label{eql1}
Let  $x\in B_{\e}\m G^{-1}(\Disc G)$. 
 The vectors $v_{1}$ and $v_{2}$  are linearly dependent  if and only if  $x \in M(G)$.
 In particular the vector field  $\nu$ from \eqref{eqe7} is a Milnor vector field on $B_{\e}^m \m (M(G)\cup G^{-1}(\Disc G))$. 
\end{lemma}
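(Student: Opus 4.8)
The statement to be proved is Lemma \ref{eql1}: for $x\in B_{\e}\m G^{-1}(\Disc G)$, the vectors $v_1(x)$ and $v_2(x)$ are linearly dependent if and only if $x\in M(G)$, and consequently $\nu$ from \eqref{eqe7} is an MVF on $B_\e^m\m(M(G)\cup G^{-1}(\Disc G))$. The plan is to unwind the definitions of $v_1,v_2$ as orthogonal projections onto $T_xX_y$ and translate linear dependence into a statement about $\nabla\rho$, $\nabla\|G\|^2$, and the normal space $N_xX_y=\span\{\Omega_2,\dots,\Omega_p\}$. First I would record the elementary linear-algebra fact: for a subspace $T\subset\bR^m$ with orthogonal complement $N$, the projections $\proj_T(a)$ and $\proj_T(b)$ are linearly dependent if and only if $a$ and $b$ are linearly dependent modulo $N$, i.e. $\span\{a,b\}\cap(N+\span\{\text{one of them}\})$ collapses — more precisely $v_1,v_2$ are dependent iff some nontrivial combination $\alpha\nabla\|G\|^2+\beta\nabla\rho$ lies in $N_xX_y$. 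Here one must handle the case where one of $v_1,v_2$ vanishes: $v_1$ never vanishes outside $G^{-1}(\Disc G)$ (as noted in the proof of Theorem \ref{eqt1}, the tube is transversal to the fibre $X_y$), so dependence of $v_1,v_2$ means either $v_2=0$ or $v_2=\lambda v_1$ for some $\lambda\in\bR$.

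Next I would match this with the Milnor set $M(G)=\overline{\{x : \rho\not\pitchfork_x G\}}$. The condition $\rho\not\pitchfork_x G$ means the gradients $\nabla\rho(x),\nabla G_1(x),\dots,\nabla G_p(x)$ are linearly dependent in a degenerate way — concretely, on the fibre $G^{-1}(G(x))$ the restriction of $\rho$ fails to be submersive, equivalently $\nabla\rho(x)\in\span\{\nabla G_1(x),\dots,\nabla G_p(x)\}$ (for $x\notin\Sing G$), or $x\in\Sing G$. The key computation is then: $\nabla\rho\in\span\{\nabla G_1,\dots,\nabla G_p\}$ should be shown equivalent to the existence of a nontrivial relation $\alpha\nabla\|G\|^2+\beta\nabla\rho\in\span\{\Omega_2,\dots,\Omega_p\}$, using that $\nabla\|G\|^2=2\sum_k G_k\nabla G_k$ and $\Omega_k=G_1\nabla G_k-G_k\nabla G_1$, so that $\span\{\nabla\|G\|^2,\Omega_2,\dots,\Omega_p\}=\span\{\nabla G_1,\dots,\nabla G_p\}$ whenever $G_1(x)\neq0$ (the working assumption, WLOG as in the proof of Theorem \ref{eqt1}; the generic vanishing locus causes no trouble by continuity and the closure in the definition of $M(G)$). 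This reduces the "iff" to a rank comparison between the $(p+1)$-frame $\{\nabla\rho,\nabla G_1,\dots,\nabla G_p\}$ and the $p$-frame $\{\nabla G_1,\dots,\nabla G_p\}$, which is exactly the infinitesimal meaning of $\rho\pitchfork_x G$.

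Then the "in particular" clause is immediate: on $B_\e^m\m(M(G)\cup G^{-1}(\Disc G))$ the first part gives that $v_1,v_2$ are linearly independent, hence in particular not antiparallel, so $\nu=v_1/\|v_1\|+v_2/\|v_2\|$ is nowhere zero; conditions (c1), (c2), (c3) of Definition \ref{def-gvf} are then checked directly — (c1) because $\nu$ is a combination of $v_1,v_2\in T_xX_y$; (c2) because $\langle\nu,\nabla\rho\rangle=\langle v_1/\|v_1\|,\nabla\rho\rangle+\langle v_2/\|v_2\|,\nabla\rho\rangle$ and each term is $\geq\|v_i\|\cdot(\text{positive})>0$ since $v_i$ is itself the projection of one of the two gradients and $v_2\neq0$; (c3) symmetrically using $v_1\neq0$. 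One subtlety to spell out: $\langle v_2,\nabla\rho\rangle=\|v_2\|^2>0$ and $\langle v_1,\nabla\|G\|^2\rangle=\|v_1\|^2>0$ trivially, while the cross terms $\langle v_1,\nabla\rho\rangle$ and $\langle v_2,\nabla\|G\|^2\rangle$ are equal (both equal $\langle v_1,v_2\rangle$ after projecting, since projection is self-adjoint and $v_i\in T_xX_y$), so both (c2) and (c3) amount to $\|v_i\|^2+\langle v_1,v_2\rangle>0$ up to positive scaling, which holds because $v_1,v_2$ are linearly independent (strict Cauchy–Schwarz).

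\textbf{Main obstacle.} I expect the genuinely delicate point to be the rank/span identity $\span\{\nabla\|G\|^2,\Omega_2,\dots,\Omega_p\}=\span\{\nabla G_1,\dots,\nabla G_p\}$ and its interaction with the loci $\{G_i=0\}$: the $\Omega_k$ are defined only after singling out a coordinate with $G_1\neq0$, and one must be careful that "linear dependence of $v_1,v_2$" is a well-defined condition independent of that choice, and that the equivalence with membership in $M(G)$ survives passing to the closure in Definition \ref{d:ms}. Handling $x\in\Sing G$ (where $\nabla G_1,\dots,\nabla G_p$ are themselves dependent, so $N_xX_y$ may drop dimension) requires noting that such $x$ lie in $M(G)$ by definition and that there $v_1,v_2$ are automatically dependent since $T_xX_y$ has dimension $\geq m-p+1$... actually the cleanest route is to treat $\Sing G\subset M(G)$ separately and do the gradient computation only on the smooth part, then invoke closure. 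Everything else is bookkeeping with orthogonal projections.
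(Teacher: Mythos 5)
Your proof is correct, and it turns on the same pivot as the paper's: linear dependence of $v_1(x)$ and $v_2(x)$ is equivalent to $\nabla\rho(x)\in\span\{\nabla G_1(x),\ldots,\nabla G_p(x)\}$, which is precisely the condition $\rho\not\pitchfork_x G$ defining $M(G)$. The difference is in the bookkeeping. You work in the ambient $\bR^m$ with the splitting $T_xX_y\oplus N_xX_y$, reduce dependence of the two projections to dependence of $\nabla\|G\|^2$ and $\nabla\rho$ modulo $N_xX_y=\span\{\Omega_2,\ldots,\Omega_p\}$, and then need the span identity $\span\{\nabla\|G(x)\|^2,\Omega_2(x),\ldots,\Omega_p(x)\}=\span\{\nabla G_1(x),\ldots,\nabla G_p(x)\}$ on $\{G_1\neq 0\}$; this identity does hold where you need it, since $G_1\nabla\|G(x)\|^{2}/2=\|G(x)\|^{2}\nabla G_1(x)+\sum_{k\ge 2}G_k\Omega_k(x)$ and $\|G(x)\|\neq 0$ off $V_G$. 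The paper instead decomposes $T_xX_y$ itself as $T_xG^{-1}(G(x))\oplus\bR\langle v_1(x)\rangle$ (its equation \eqref{eqe8}): since $\nabla\|G\|^2$ lies in $\span\{\nabla G_j\}$ and hence is orthogonal to $T_xG^{-1}(G(x))$, the vector $v_1$ lives entirely in the second summand, so dependence of $v_1,v_2$ amounts to the vanishing of the $T_xG^{-1}(G(x))$-component of $v_2$, i.e.\ to criticality of $\rho_{|G^{-1}(G(x))}$ at $x$; this sidesteps both the $\Omega_k$ computation and the choice of a nonvanishing coordinate $G_1$. Two small points about your write-up. First, the worry about $x\in\Sing G$ is vacuous: $\Sing G\subset G^{-1}(\overline{G(\Sing G)})\subset G^{-1}(\Disc G)$, so no singular point of $G$ lies in the domain of the lemma (your parenthetical claim that $v_1,v_2$ would be ``automatically dependent'' there is not justified as stated, but it is moot). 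Second, in the verification of (c2)--(c3) your first estimate (``each term is $\ge\|v_i\|\cdot(\text{positive})$'') is wrong for the cross terms, which can be negative; but your subsequent computation $\langle\nu(x),\nabla\rho(x)\rangle=\|v_2(x)\|+\langle v_1(x),v_2(x)\rangle/\|v_1(x)\|>0$ via strict Cauchy--Schwarz for linearly independent vectors is the correct argument and closes the point that the paper dismisses as ``an easy consequence''.
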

\begin{proof}
For $x\in B_{\e} \setminus G^{-1}(\Disc G)$ we consider the decomposition:
	\begin{equation}\label{eqe8}
	\displaystyle{T_{x} X_{y} =T_{x}G^{-1}(G(x))\oplus \bR\langle v_1(x)\rangle},
	\end{equation}
	where, by its definition, the vector $v_1(x)$ is orthogonal to  $T_{x}G^{-1}(G(x))$ in $T_{x} X_{y}$. 
			
One writes $v_{1}(x)=v_{1}^{1}(x)+v_{1}^{2}(x)$ and  $v_{2}(x)=v_{2}^{1}(x)+v_{2}^{2}(x)$ according to the decomposition \eqref{eqe8}.
 From the definitions, we have  $v^{1}_{1}(x) =0$  for all $x\in B_{\e}\m G^{-1}(\Disc G)$, and $v^{1}_{2}(x)= 0$ if and only if  $x  \in M(G)$.  This proves the first claim, and the second is an easy consequence.
\end{proof}

For $x\in M(G)\m G^{-1}(\Disc G)$ one has that $v_{1}(x)$ and $v_{2}(x)$ are collinear, which amounts to the relation:
\begin{equation}\label{rho}
\nabla \rho (x) =a(x) \nabla \|G(x)\|^2 + \sum_{j=2}^{p}b_j(x)\Omega_j (x),
\end{equation}
where
	\begin{equation}\label{coef}
	a(x)= \frac{\langle \nabla \rho (x),v_{1}(x)\rangle}{\|v_{1}(x) \|^2}.
	\end{equation}

This proves in particular that  $x\in M(\Psi_G)\m G^{-1}(\Disc G)$ $\Leftrightarrow$ $a(x)=0$.

With these notations one may characterise the existence of a Milnor vector field for $G$ as follows,  extending the particular case $\Disc G=\{0\}$  of \cite[Theorem 3.3.1]{Ha}.

\begin{theorem}\label{eqt2} \
Let $G:(\bR^{m},0)\to (\bR^{p},0),$ $m\geq p \geq 2$ be an analytic nice map with radial discriminant
such that the Milnor-Hamm fibrations \eqref{eq:tube} and \eqref{ssf} exist.
There exists a Milnor vector field for $G$ on $B_{\e}\m G^{-1}(\Disc G),$ for some small enough $\e >0$ if and only if
 $a(x)>0$ for any $x\in M(G)\m G^{-1}(\Disc G)$. 
\end{theorem}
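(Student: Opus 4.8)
The plan is to prove both implications by leveraging the bisector vector field $\nu$ from \eqref{eqe7} together with the analysis of where it fails to satisfy the MVF conditions, which by Lemma \ref{eql1} happens exactly on $M(G)\m G^{-1}(\Disc G)$. First, for the ``if'' direction, I would start from the vector field $\nu$ of \eqref{eqe7}, which is already a MVF on $B_{\e}^m\m(M(G)\cup G^{-1}(\Disc G))$ by Lemma \ref{eql1}, and I would show that under the hypothesis $a(x)>0$ on $M(G)\m G^{-1}(\Disc G)$ one can extend it to a genuine MVF on all of $B_{\e}\m G^{-1}(\Disc G)$. The key point is that on $M(G)\m G^{-1}(\Disc G)$ the vectors $v_1(x)$ and $v_2(x)$ are collinear by Lemma \ref{eql1}, and the relation \eqref{rho} with coefficient \eqref{coef} shows that $v_2(x)=a(x)v_1(x)$ modulo the normal directions $\Omega_j$; since $v_1(x)$ and $v_2(x)$ both lie in $T_xX_y$, this forces $v_2(x)=a(x)v_1(x)$ there. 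Thus $a(x)>0$ means $v_1$ and $v_2$ point in the \emph{same} direction on the Milnor set, so $\nu(x)=v_1(x)/\|v_1(x)\|+v_2(x)/\|v_2(x)\|=2v_1(x)/\|v_1(x)\|\neq 0$ remains non-zero and still satisfies (c1) (it is tangent to $X_y$, hence to the fibres of $\Psi_G$), (c2) and (c3) (both inner products are positive because $v_1,v_2$ are the tangential projections of $\nabla\|G\|^2$ and $\nabla\rho$ respectively, and the normal components contribute nothing to $\langle\nu,\nabla\|G\|^2\rangle$, $\langle\nu,\nabla\rho\rangle$ after the sign computation). So $\nu$ itself is already a MVF on all of $B_{\e}\m G^{-1}(\Disc G)$, which proves the ``if'' direction.

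For the ``only if'' direction, suppose there exists a MVF $\mu$ for $G$ on $B_{\e}\m G^{-1}(\Disc G)$, and let $x\in M(G)\m G^{-1}(\Disc G)$. By (c1), $\mu(x)\in T_xX_y$ where $y=\Psi_G(x)$. On the Milnor set we have just argued $v_2(x)=a(x)v_1(x)$ inside $T_xX_y$, and moreover $T_xX_y=T_xG^{-1}(G(x))\oplus\bR\langle v_1(x)\rangle$ by \eqref{eqe8}. Decomposing $\mu(x)=w(x)+c(x)v_1(x)$ with $w(x)\in T_xG^{-1}(G(x))$, condition (c3) gives $0<\langle\mu(x),\nabla\|G(x)\|^2\rangle=\langle\mu(x),v_1(x)\rangle=c(x)\|v_1(x)\|^2$, since $\nabla\|G(x)\|^2$ is orthogonal to $T_xG^{-1}(G(x))$ and its tangential-to-$X_y$ projection is $v_1(x)$; hence $c(x)>0$. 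Similarly condition (c2) gives $0<\langle\mu(x),\nabla\rho(x)\rangle=\langle\mu(x),v_2(x)\rangle$, because the projection of $\nabla\rho$ to $T_xX_y$ is $v_2(x)$ and $\mu(x)\in T_xX_y$. Substituting $v_2(x)=a(x)v_1(x)$ and $\mu(x)=w(x)+c(x)v_1(x)$ yields $0<\langle\mu(x),v_2(x)\rangle=a(x)c(x)\|v_1(x)\|^2$, and since $c(x)>0$ and $\|v_1(x)\|>0$ this forces $a(x)>0$, as required.

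\textbf{Expected main obstacle.} I expect the delicate point to be keeping the two orthogonal decompositions straight and justifying the replacements $\langle\mu,\nabla\|G\|^2\rangle=\langle\mu,v_1\rangle$ and $\langle\mu,\nabla\rho\rangle=\langle\mu,v_2\rangle$ carefully: this rests on $\mu(x)\in T_xX_y$ and on the fact that $v_1$ (resp.\ $v_2$) is defined as the orthogonal projection of $\nabla\|G\|^2$ (resp.\ $\nabla\rho$) onto $T_xX_y$, so that the difference $\nabla\|G\|^2-v_1$ is orthogonal to $T_xX_y\ni\mu(x)$. A secondary subtlety is the identity $v_2(x)=a(x)v_1(x)$ on $M(G)\m G^{-1}(\Disc G)$: one must observe that although \eqref{rho} a priori only records a linear relation among $\nabla\rho$, $\nabla\|G\|^2$ and the $\Omega_j$, projecting it onto $T_xX_y$ kills the $\Omega_j$ (which span the normal space $N_xX_y$) and sends $\nabla\rho\mapsto v_2$, $\nabla\|G\|^2\mapsto v_1$, leaving exactly $v_2(x)=a(x)v_1(x)$. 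Once these two facts are nailed down the rest is a short sign bookkeeping, so there is no serious analytic difficulty; the work is purely the linear algebra of these nested orthogonal splittings.
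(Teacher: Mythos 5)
Your proof is correct and follows essentially the same route as the paper: both directions hinge on pairing the relation \eqref{rho} with a MVF (the $\Omega_j$ terms dying by (c1), and (c2)--(c3) forcing $a(x)>0$), and on observing that $a(x)>0$ makes the bisector field $\nu$ of \eqref{eqe7} nonvanishing on $M(G)\m G^{-1}(\Disc G)$ via Lemma \ref{eql1}. Your ``only if'' direction introduces the extra decomposition $\mu=w+c\,v_1$, which is harmless but unnecessary --- the paper gets $\langle\nabla\rho,\nu\rangle=a\langle\nabla\|G\|^2,\nu\rangle$ directly --- and your explicit check of (c2)--(c3) on the Milnor set in the ``if'' direction is if anything a bit more careful than the paper's.
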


\begin{proof}
The implication ``$\Rightarrow$'' follows from the definition \eqref{eqe7} of the vector field $\nu(x)$. In fact, by condition (c1) one has that $\left\langle \nu(x), \Omega_j(x) \right\rangle=0 $ for any $j=2,\ldots,p$. Therefore, $\left\langle \nabla \rho (x),\nu(x) \right\rangle = a(x)\left\langle \nabla \|G(x)\|^2, \nu(x)\right\rangle$, which  by (c2) and (c3) implies that $a(x)>0$. 
	
Reciprocally,   if $a(x)>0$, it follows from Lemma \ref{eql1} and  \eqref{rho}
 that the  vector field $\nu(x)$ has no zeroes on $B^{m}_{\e} \m V_G$, hence it is a Milnor vector field for $G$.
\end{proof}

There are several other criteria for the existence of a Milnor vector field for $G$; we discuss some of them in \cite{AR}.



\begin{proposition}\label{p:milnorset}
  The image of the restriction $G_{|}: B^{m}_{\e}\cap M(G)\m G^{-1}(\Disc G) \to \bR^{p}$  contains $B^{p}_{\eta}\cap \im G\m \Disc G$
 for some small enough ball $B^{p}_{\eta}$ centred at  the origin.
\end{proposition}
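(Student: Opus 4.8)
The plan is to realise every $y\in B^{p}_{\eta}\cap\im G\m\Disc G$ as the value $G(x_{0})$ of a point $x_{0}$ at which $\rho$, restricted to the fibre $G^{-1}(y)$, has a local minimum sitting strictly inside $B^{m}_{\e}$; such an $x_{0}$ automatically lies in $M(G)$, and since $G(x_{0})=y\notin\Disc G$ it also lies outside $G^{-1}(\Disc G)$, which gives the inclusion.

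First I would fix $\e>0$ small enough that the Milnor-Hamm tube fibration \eqref{eq:tube} exists, and then choose $\eta\ll\e$ small enough that for every $y\in B^{p}_{\eta}\cap\im G$ the fibre $G^{-1}(y)$ already meets the ball $B^{m}_{\e}$ — this is the local conic structure of the nice germ $G$ built into Definition~\ref{d:tube}. Fix such a $y$, and assume moreover $y\notin\Disc G$. Because $\Disc G\supseteq\overline{G(\Sing G)}$, no point of $G^{-1}(y)$ is a singular point of $G$, so $G^{-1}(y)$ is a smooth $(m-p)$-submanifold near each of its points. The set $G^{-1}(y)\cap\overline{B}^{m}_{\e}$ is compact and non-empty, so $\rho$ attains a minimum on it at some $x_{0}$; since $G^{-1}(y)$ meets the open ball, this minimum is $<\e^{2}$, hence $\|x_{0}\|<\e$ and $x_{0}$ is an interior point. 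Consequently $x_{0}$ is a local minimum of $\rho$ on the manifold $G^{-1}(y)$, so $\nabla\rho(x_{0})$ is orthogonal to $T_{x_{0}}G^{-1}(y)$, i.e. $\rho \not\pitchfork_{x_{0}} G$; by Definition~\ref{d:ms}, $x_{0}\in M(G)$. Thus $x_{0}\in B^{m}_{\e}\cap M(G)\m G^{-1}(\Disc G)$ and $y=G(x_{0})$ is in the image of the restriction. Letting $y$ range over $B^{p}_{\eta}\cap\im G\m\Disc G$ yields the proposition.

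The only step that is not a routine compactness-plus-Lagrange-multiplier argument is the claim that the germ of the fibre $G^{-1}(y)$ at the origin is faithfully represented inside $B^{m}_{\e}$ for $\eta\ll\e$ — equivalently that $B^{p}_{\eta}\cap\im G\subseteq G(B^{m}_{\e})$, a property which genuinely fails for non-nice germs such as $(x,xy):(\bC^{2},0)\to(\bC^{2},0)$. I would either take this as input from the conic structure of nice germs in \cite{ART}, or argue it directly by introducing an auxiliary radius $\e'<\e$ for which \eqref{eq:tube} also exists and invoking the independence of the tube fibration of the small radius, which identifies the fibre over $y$ computed in $B^{m}_{\e'}$ with the one computed in $B^{m}_{\e}$ and in particular shows it is non-empty and meets $B^{m}_{\e'}\subset B^{m}_{\e}$.
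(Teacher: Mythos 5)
Your proof is correct and follows essentially the same route as the paper's: realise $y$ as the value of a local minimum of $\rho$ on the fibre $G^{-1}(y)$ inside the ball, and observe that such a critical point of $\rho|_{G^{-1}(y)}$ lies in $M(G)$ but not in $G^{-1}(\Disc G)$. You supply more detail than the paper does (the compactness argument forcing the minimum into the interior, the smoothness of the fibre away from $\Sing G$, and the use of niceness to guarantee $B^{p}_{\eta}\cap \im G\subseteq G(B^{m}_{\e})$), all of which is consistent with what the paper leaves implicit.
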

\begin{proof}
Let $y\in B^{p}_{\eta}\cap\im G \m \Disc G$, for some $\eta >0$ which fits in the Milnor-Hamm tube fibration, and such that the fibre $G^{-1}(y)$ is not empty. 
The distance function to the origin $\rho_{|}: \overline{B^{m}_{\e}}\cap \overline{G^{-1}(y)} \to \bR_{\geq 0}$ has at least one local minimum point $x_y$ in the interior  $B^{m}_{\e}\cap G^{-1}(y)$. This implies that   
$x_{y} \in M(G)$. 
\end{proof}
Let $\bigcup_{\beta}M_{\beta}$ be the decomposition into connected components of the subanalytic set germ at the origin $M(G)\m G^{-1}(\Disc G)$.  See Example \ref{e:non-connected} where we have 8  connected components.
 \begin{corollary}\label{c:milnorset}
 If one of the following conditions holds:
\begin{enumerate}
  \rm \item \it  $M(G)\m G^{-1}(\Disc G)$ is connected, 
  \rm \item \it for any $\beta$ there is $y= y_{\beta}\in S_{1}^{p-1}\m \Disc G$ such that  the germ at 0 of $M_{\beta} \cap X_{y}$ has dimension $>0$,
\end{enumerate}
 then there exists a Milnor vector field for $G$. 
\end{corollary}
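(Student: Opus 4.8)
The plan is to deduce the Corollary from Proposition~\ref{p:milnorset} together with Theorem~\ref{eqt2} and the sign analysis of the coefficient $a(x)$ carried out just above it. Recall from Theorem~\ref{eqt2} that a MVF for $G$ exists on $B^m_\e\m G^{-1}(\Disc G)$ if and only if $a(x)>0$ for every $x\in M(G)\m G^{-1}(\Disc G)$; and recall that $a(x)=0$ exactly on $M(\Psi_G)\m G^{-1}(\Disc G)$, which by the $\rho$-regularity hypothesis (condition \eqref{eq:reg}) is empty, so in fact $a(x)\ne 0$ on all of $M(G)\m G^{-1}(\Disc G)$. Hence the only thing to rule out is $a(x)<0$, and by the formula \eqref{coef} the sign of $a$ on a given connected component $M_\beta$ is locally constant — indeed $a$ is a continuous nowhere-zero function on $M_\beta$ — so it suffices to find a single point in (the germ at $0$ of) each $M_\beta$ where $a>0$.

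First I would treat case~(a): if $M(G)\m G^{-1}(\Disc G)$ is connected there is only one component $M_\beta$, and Proposition~\ref{p:milnorset} hands us a point $x_y\in M(G)$ which is a local minimum of $\rho$ restricted to $\overline{B^p_\eta}\cap\overline{G^{-1}(y)}$ lying in the interior of that fibre. At such a point the gradient $\nabla\rho(x_y)$ is normal to $T_{x_y}G^{-1}(G(x_y))$, so in the decomposition \eqref{eqe8} the tangential-to-$X_y$ part of $\nabla\rho$ is a nonnegative multiple of $v_1(x_y)$; more precisely $\langle\nabla\rho(x_y),v_1(x_y)\rangle\ge 0$, hence $a(x_y)\ge 0$ by \eqref{coef}, and since $a$ never vanishes we get $a(x_y)>0$. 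By the locally-constant-sign argument, $a>0$ on all of $M(G)\m G^{-1}(\Disc G)$, and Theorem~\ref{eqt2} gives the MVF.

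Next, case~(b): for each component $M_\beta$ we are given some $y=y_\beta\in S^{p-1}_1\m\Disc G$ with $\dim_0\bigl(M_\beta\cap X_y\bigr)>0$. The point is to combine this with Proposition~\ref{p:milnorset} applied to the radius through $y_\beta$: I would argue that along that radius the local-minimum construction produces minimum points $x_{ty}$ of $\rho$ on the fibres $G^{-1}(ty)$ for $t\in(0,\eta)$, and because $M_\beta\cap X_{y_\beta}$ is positive-dimensional near $0$ one can select such minimum points lying in $M_\beta$ itself (using the curve selection lemma on the subanalytic set $M_\beta$, or simply that a positive-dimensional germ accumulates at $0$ along the fibres that foliate $X_{y_\beta}$, so some branch of minimum points stays in $M_\beta$). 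At each such $x_{ty}\in M_\beta$ we again have $a(x_{ty})\ge 0$ as in case~(a), hence $>0$, and therefore $a>0$ on the whole component $M_\beta$ by connectedness. Doing this for every $\beta$ yields $a>0$ on all of $M(G)\m G^{-1}(\Disc G)$, and Theorem~\ref{eqt2} again delivers the MVF.

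The routine parts — continuity and non-vanishing of $a$, the sign $\langle\nabla\rho,v_1\rangle\ge 0$ at an interior local minimum of $\rho$ along a fibre — are immediate from the definitions \eqref{eqe8}--\eqref{coef}. The one genuinely delicate point, and where I expect to spend the most care, is the matching step in case~(b): showing that the local-minimum points supplied by Proposition~\ref{p:milnorset} can actually be chosen \emph{inside} the prescribed component $M_\beta$, rather than merely in $M(G)$. This requires using the positive-dimensionality of $M_\beta\cap X_{y_\beta}$ at $0$ together with subanalyticity (curve selection) to produce, for arbitrarily small radii, a minimum point of $\rho$ on the corresponding fibre that lies on a branch of $M_\beta$ approaching the origin; once that is in hand the rest is the sign bookkeeping already described.
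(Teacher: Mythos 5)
Your overall strategy --- anchor the sign of the coefficient $a$ at suitable points of each component $M_\beta$, propagate it over the whole component using connectedness together with the continuity and non-vanishing of $a$, and conclude via Theorem \ref{eqt2} (equivalently Lemma \ref{eql1}) --- is the same as the paper's. The gap is in how you anchor the sign. You assert that at an interior local minimum $x_y$ of $\rho$ restricted to a fibre $G^{-1}(y)$ one automatically has $\langle\nabla\rho(x_y),v_1(x_y)\rangle\ge 0$. This does not follow. The first-order minimality condition says only that $\nabla\rho(x_y)$ is orthogonal to $T_{x_y}G^{-1}(y)$, hence that its projection to $T_{x_y}X_y$ is \emph{some} real multiple of $v_1(x_y)$ --- which is exactly the statement $x_y\in M(G)$, and is all that Proposition \ref{p:milnorset} delivers. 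Nothing in the minimality of $\rho$ along a single fibre controls the sign of that multiple: the sign records whether $\rho$ increases or decreases in the direction, transverse to the fibre inside $X_y$, along which $\|G\|^2$ increases, and a local minimum of $\rho$ on a fixed level set can perfectly well have the neighbouring fibres of larger $\|G\|$ lying \emph{closer} to the origin, giving $a<0$. The second-order condition does not help, as it concerns only the Hessian of $\rho$ restricted to the fibre. The same unjustified pointwise claim is reused in your case (b).

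The paper obtains the sign from Lemma \ref{l:milnor}, i.e.\ Milnor's Curve Selection Lemma argument with $f=\rho$ and $g=\|G\|^2$, applied on a positive-dimensional subset of $M_\beta\cap X_{y}$ accumulating at the origin: for such a set there is an $\e>0$ so that inside $B^{m}_{\e}$ the two projected gradients cannot point in opposite directions, and since on $M(G)\m G^{-1}(\Disc G)$ they are collinear and non-zero, they must point the same way, i.e.\ $a>0$ there. This is an intrinsically asymptotic statement --- it is valid only in a sufficiently small ball and requires the anchoring set to accumulate at $0$, which is precisely why hypothesis (b) demands $\dim_0(M_\beta\cap X_{y_\beta})>0$ and why the paper reduces (a) to (b) via Proposition \ref{p:milnorset}; it is not a pointwise consequence of being a minimum. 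Incidentally, your case (b) also takes an unnecessary detour: once $M_\beta\cap X_{y_\beta}$ is known to be positive-dimensional at $0$, Lemma \ref{l:milnor} applies to it directly, and there is no need to manufacture minimum points of $\rho$ lying inside $M_\beta$. To repair your proof, replace the ``sign at a local minimum'' step by an appeal to Lemma \ref{l:milnor} on each $M_\beta\cap X_{y}$.
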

\begin{proof}
(a).  Proposition \ref{p:milnorset} implies  that  $\dim M(G) \cap X_{y} >0$ for any $y\in S_{1}^{p-1}\m \Disc G$, thus (a) 
 is a particular case of (b).
 
 \noindent
 (b). For some fixed $\beta$ we apply Milnor's Lemma \ref{l:milnor} to that $X_{y}$ for which $\dim M_{\beta} \cap X_{y} >0$. This shows that the vectors $v_{1}(x)$ and $v_{2}(x)$ point in the same direction for all $x\in M_{\beta}\cap X_{y}$ and hence for all $x\in M_{\beta}$ since this is connected.  Since this is true for any $\beta$ it is then true for the whole set $M(G)\m G^{-1}(\Disc G)$, thus  the vector field $\nu$ has no zeroes on it.
Finally we may apply Lemma \ref{eql1} to conclude that $\nu$ is a Milnor vector field.
\end{proof}

 \section{Classes of maps with equivalent Milnor-Hamm fibrations}

 
 \subsection{Mixed  functions and the equivalence problem}
 
 \begin{definition}\label{d:polar} 
 The mixed function $F:\bC^{n} \to \bC$ is called \emph{polar} weighted-homogeneous of degree $k$ if there are non-zero integers 
$p_{1},\ldots,p_{n}$ and $k>0$, such that $\gcd(p_{1},\ldots p_{n})=1$ and $\sum_{j=1}^{n}p_{j}(\nu_{j}-\mu_{j})=k$, for any monomial of the expansion $F(\mathbf{z}) = F(\mathbf{z},\mathbf{\bar{z}})=\underset{{\scriptstyle \nu,\mu}}{\sum}c_{\nu,\mu}\mathbf{z}^{\nu}\mathbf{\bar{z}^{\mu}}$.

The corresponding  $S^{1}$-action on $\mathbb{C}^{n}$ is, for $\lambda\in S^{1}$: 
\[ 
\lambda\cdot(\mathbf{z},\mathbf{\overline{z}})  =(\lambda^{p_{1}}z_{1},\ldots,\lambda^{p_{n}}z_{n},\lambda^{-p_{1}}\overline{z}_{1},\ldots,\lambda^{-p_{n}}\overline{z}_{n}).
\] 
\end{definition}

\smallskip

 \begin{theorem}\label{t:polar}
 	Let $F:\bC^n \to \bC$ be a polar weighted homogeneous mixed function. Then $F$ is a nice map germ with 
	$\Disc F = \{ 0\}$,  the Milnor fibrations \eqref{eq:tube} and \eqref{ssf} exist, and  the fibrations \eqref{eq:emptytube} and  \eqref{ssf} are equivalent.	
 \end{theorem}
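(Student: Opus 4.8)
The plan is to check, one by one, the hypotheses required by the results already established and then to invoke them. Concretely I would verify that (1) $F$ is nice with radial discriminant; (2) the Milnor-Hamm tube fibration \eqref{eq:tube} exists; (3) $\Psi_F$ is $\rho$-regular, so the sphere fibration \eqref{ssf} exists by Theorem~\ref{sf2}; and (4) $F$ admits a Milnor vector field, so Theorem~\ref{tm} gives the equivalence of \eqref{eq:emptytube} and \eqref{ssf}. The common engine for all four steps is the $S^1$-action: write $\bv(\bz)=(ip_1z_1,\dots,ip_nz_n)$ for its infinitesimal generator and $\phi_\theta(\bz)=(e^{ip_1\theta}z_1,\dots,e^{ip_n\theta}z_n)$ for its flow. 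Each $\phi_\theta$ is a linear isometry of $\bR^{2n}=\bC^n$ fixing $0$, and $F\circ\phi_\theta=e^{ik\theta}F$ gives at once $\rho\circ\phi_\theta=\rho$, $\|F\|^2\circ\phi_\theta=\|F\|^2$, $dF_x(\bv(x))=ikF(x)$, $\langle\bv(x),\nabla\rho(x)\rangle=0$, $\langle\bv(x),\nabla\|F(x)\|^2\rangle=0$, and $d(\Psi_F)_x(\bv(x))=ik\,\Psi_F(x)$ for $x\notin V_F:=F^{-1}(0)$; also $F^{-1}(\Disc F)=V_F$ once Step (1) is done.

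For Step (1): since $F$ is given by finitely many monomials, $\Sing F$, $F(\Sing F)$ and $\im F$ are semialgebraic, so $F$ is nice. The $S^1$-equivariance makes $\Sing F$ invariant under the source action and makes $\im F$, $F(\Sing F)$ rotation-invariant in the target. If $F(\bz_0)=w_0\neq0$ then $t\mapsto|F(t\bz_0)|$ runs through all of $[0,|w_0|]$, so by rotation-invariance $\im F$ contains the disc of radius $|w_0|$ and $\partial\overline{\im F}$ is empty near $0$; and $\overline{F(\Sing F)}$ is a rotation-invariant semialgebraic germ of dimension $<2$, so the set of moduli it meets is finite, hence this germ is contained in $\{0\}$. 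Thus $\Disc F\subseteq\{0\}$ is radial, and moreover $\Sing F\cap B^{2n}_\e\subseteq V_F$ for $\e$ small. For Step (2) I would show that $F$ satisfies condition \eqref{eq:main}: at $x\in M(F)\setminus V_F$ close to $V_F$ (so $x\notin\Sing F$), $\nabla\rho(x)$ is a real combination of $\nabla\mathrm{Re}\,F(x)$ and $\nabla\mathrm{Im}\,F(x)$; pairing this with $\bv(x)$ and feeding in the polar Euler relation $\sum_j p_jz_j\,\partial_{z_j}F-\sum_j p_j\bar z_j\,\partial_{\bar z_j}F=kF$ forces $x$ to be a nonzero real multiple of $\nabla\|F\|^2(x)$, and a Curve Selection Lemma argument then bars $M(F)\setminus V_F$ from accumulating on $V_F\setminus\{0\}$ (alternatively, polar weighted homogeneous maps are among the classes with tube fibration in \cite{ART}). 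Then \cite[Lemma 3.3]{ART} supplies \eqref{eq:tube}.

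Step (3) will be the clean one. Since $d(\Psi_F)_x(\bv(x))\neq0$ off $V_F$, $\Psi_F$ is a submersion on $\bC^n\setminus V_F$, so each $X_y:=\Psi_F^{-1}(y)$ is a smooth hypersurface; if $\rho\not\pitchfork_x\Psi_F$ then the codimension-one subspaces $T_xX_y$ and $T_xS^{2n-1}_{\|x\|}$ would coincide, which is impossible because $\bv(x)\in T_xS^{2n-1}_{\|x\|}$ while $\bv(x)\notin T_xX_y$. Hence $M(\Psi_F)=\emptyset\subseteq V_F$, i.e. $\Psi_F$ is $\rho$-regular, and Theorem~\ref{sf2} yields \eqref{ssf}.

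For Step (4) I would use the bisector field from the proof of Theorem~\ref{eqt1}: on $B^{2n}_\e\setminus V_F$ set $v_1=\proj_{T_xX_y}(\nabla\|F\|^2)$, $v_2=\proj_{T_xX_y}(\nabla\rho)$, $\nu=v_1/\|v_1\|+v_2/\|v_2\|$, where $v_1\neq0$ for $\e$ small (as $\Sing F\cap B^{2n}_\e\subseteq V_F$) and $v_2\neq0$ off $V_F$ by Step (3). The point where polar weighted homogeneity really bites — beyond Theorem~\ref{eqt1}, whose output is merely fibre-equivalence and which does not apply over the non-contractible base $S^{p-1}_1\setminus\Disc F=S^1_1$ — is to make Milnor's constant uniform over the base: Lemma~\ref{l:milnor} applied to $(X_y,\rho,\|F\|^2)$ gives some $\e(y)>0$ with $v_1,v_2$ never of opposite direction on $X_y\cap B^{2n}_{\e(y)}$, and since $\phi_\theta$ is an isometry with $\rho\circ\phi_\theta=\rho$, $\|F\|^2\circ\phi_\theta=\|F\|^2$, $\phi_\theta(X_y)=X_{e^{ik\theta}y}$, and $\{e^{ik\theta}y:\theta\in\bR\}=S^1_1$, the number $\e(y)$ may be taken independent of $y$. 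Then for $\e$ small $\nu$ is nowhere zero on $B^{2n}_\e\setminus V_F$, it lies in $T_xX_y=T_x\Psi_F^{-1}(\Psi_F(x))$ (condition (c1)), and from $\langle v_1,\nabla\|F\|^2\rangle=\|v_1\|^2$, $\langle v_2,\nabla\rho\rangle=\|v_2\|^2$ and the bisector identity it satisfies $\langle\nu,\nabla\rho\rangle=\|v_2\|(1+\cos\angle(v_1,v_2))>0$ and $\langle\nu,\nabla\|F\|^2\rangle=\|v_1\|(1+\cos\angle(v_1,v_2))>0$ (conditions (c2), (c3)). So $\nu$ is a Milnor vector field for $F$, and Theorem~\ref{tm} concludes. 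I expect the two delicate points to be the verification of \eqref{eq:main} in Step (2) and, above all, the uniformity of Milnor's constant over the base in Step (4); the latter is the conceptual crux and is exactly where polar weighted homogeneity is used beyond the fibre-equivalence of Theorem~\ref{eqt1}.
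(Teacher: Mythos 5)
Your proof is correct, and it reaches the same destination as the paper by a genuinely different route in the decisive step. For niceness, $\Disc F=\{0\}$, and the existence of the two fibrations, the paper simply cites \cite[\S 4.1]{ACT}, \cite[Theorem 5.2]{PT} and \cite[Theorem 1.4]{ACT}, whereas you re-derive the sphere fibration from Theorem \ref{sf2} by checking $M(\Psi_F)=\emptyset$ via the infinitesimal generator $\bv$ (a clean argument, valid here because $p=2$ makes $X_y$ a hypersurface), and you only sketch the verification of \eqref{eq:main} --- that is the one place where you should either complete the curve-selection argument or, as you yourself suggest, fall back on the citation, exactly as the paper does. The real divergence is in the equivalence step: the paper observes that each connected component $M_\beta$ of $M(F)\m V_F$ is $S^1$-invariant, concludes that $M_\beta\cap X_y$ is positive-dimensional for some $y$, and invokes Corollary \ref{c:milnorset}(b) (Lemma \ref{l:milnor} applied on $M_\beta\cap X_y$, then propagation of the sign by connectedness of $M_\beta$); you instead apply Lemma \ref{l:milnor} to each whole fibre $X_y$ and use that $\phi_\theta$ is a linear isometry preserving $\rho$ and $\|F\|^2$ and permuting the fibres transitively over $S^1_1$ to make Milnor's radius $\e(y)$ uniform in $y$, obtaining a global MVF directly and then invoking Theorem \ref{tm}. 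Your version buys a self-contained argument that bypasses Corollary \ref{c:milnorset} and, in particular, the slightly elliptic step in the paper where $S^1$-invariance of $M_\beta$ is asserted to yield $\dim(M_\beta\cap X_y)>0$; the paper's version buys brevity by localizing Milnor's lemma to the finitely many components of the Milnor set and never having to discuss uniformity of the constant over the base. You are also right that Theorem \ref{eqt1} alone is insufficient here since the base $S^1_1\m\Disc F=S^1_1$ is not contractible, which is precisely why both arguments must produce a global MVF.
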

\begin{proof}
From \cite[\S 4.1]{ACT} it follows (due to the $S^{1}$-action) that  polar weighted-homogeneous maps  are \emph{nice}, more precisely because the image $F(B_{\e}^{2n})$ contains a small neighbourhood of the origin for any $\e>0$, and necessarily $\Disc F = \{ 0\}$. The discriminant is thus trivially radial. 

It was proved in  \cite[Theorem 5.2]{PT} that a polar weighted-homogeneous mixed function germ $F$ has  tube fibration,  and in \cite[Theorem 1.4]{ACT} that such $F$ has a sphere fibration. It therefore remains to show that they are equivalent.

Each  component $M_\beta$ of the decomposition into connected components $\bigcup_{\beta}M_{\beta}$ of the semi-analytic set germ at the origin $M(F) \m V_F$ is invariant under the $S^{1}$-action, more precisely one has that $ \lambda\, M_\beta = M_\beta$
for any  $\lambda \in S^{1}$.  It follows that this verifies the hypothesis (b) of Corollary \ref{c:milnorset} and thus our claim follows.
\end{proof}

 
 \subsection{Maps with radial action}
 Let $t\cdot x := (t^{q_1}x_1, \ldots  , t^{q_m} x_m)$
 for $ t\in \bR_{+}$ and $q_1, \ldots, q_m \in \mathbb{N}^{\ast}$ relatively prime positive integers. One says that the map
  $G=(G_{1}, \cdots, G_{p}) :{\bR}^{m} \to {\bR}^{p}$ is  \textit{radial weighted-homogeneous} (or radial, for short) of weights $(q_{1}, \ldots, q_{m}) $ and of degree $d >0,$ if   $ G(t\cdot x)=t^d G(x)$. 
  
  \smallskip
  
 \begin{theorem}\label{t2}
 	Let $G:(\bR^m,0) \to (\bR^p,0)$ be a radial weighted homogeneous map germ, and satisfying the condition \eqref{eq:main}. Then $G$ is nice, with radial discriminant,  has Milnor-Hamm tube and sphere fibrations, and the fibrations \eqref{eq:emptytube} and \eqref{ssf} are equivalent.
 \end{theorem}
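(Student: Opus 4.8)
The plan is to reduce Theorem \ref{t2} to the machinery already assembled in the paper, exploiting the $\bR_+$-action to supply the key positivity. First I would check that $G$ is \emph{nice} and that $\Disc G$ is radial. Radiality of $\Sing G$ and hence of $\overline{G(\Sing G)}$ is immediate: if $x \in \Sing G$ then, by differentiating $G(t\cdot x) = t^d G(x)$ and using that $t\cdot$ is a diffeomorphism for $t>0$, one sees $t\cdot x \in \Sing G$; consequently $G(\Sing G)$ is invariant under the linear $\bR_+$-action $s \mapsto t^d s$ on $\bR^p$, i.e.\ it is a union of rays, and the same holds for $\partial\overline{\im G}$ because $\overline{\im G}$ is $\bR_+$-invariant. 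Nice-ness follows from \cite[Lemma 2.4]{ART} once one observes that either $\Sing G \cap V_G \subsetneq V_G$ (so $\im G$ contains a neighbourhood of $0$) or else one argues directly that the $\bR_+$-invariance forces $G(\Sing G)$ and $\im G$ to be well defined as germs. Since $G$ satisfies \eqref{eq:main} by hypothesis, the existence result \cite[Lemma 3.3]{ART} gives the Milnor-Hamm tube fibration \eqref{eq:tube}.

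Next I would establish $\rho$-regularity of $\Psi_G$, which together with \eqref{eq:main} yields the sphere fibration \eqref{ssf} by Theorem \ref{sf2}. Here the homogeneity is decisive: the orbit map $t \mapsto t\cdot x$ traces out a curve along which $\|G\|^2$ and $\rho$ are both strictly monotone increasing for $\rho(x)$ small (because $\rho(t\cdot x) \to 0$ and $\|G(t\cdot x)\| = t^d\|G(x)\| \to 0$ as $t \to 0$ when $x \notin V_G$), while $\Psi_G(t\cdot x) = \Psi_G(x)$ is constant. Thus the Euler-type vector field $\xi(x) := \frac{d}{dt}\big|_{t=1}(t\cdot x) = (q_1 x_1, \ldots, q_m x_m)$ is tangent to the fibres of $\Psi_G$ and satisfies $\langle \xi(x), \nabla\rho(x)\rangle > 0$ for $x \neq 0$ and $\langle \xi(x), \nabla\|G(x)\|^2\rangle > 0$ for $x \notin V_G$. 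The first of these says precisely that $\rho \pitchfork_x G$ has no obstruction coming from the radial direction inside a $\Psi_G$-fibre; more carefully, one shows $M(\Psi_G) \cap (B_\e \m V_G) = \emptyset$ by noting that at a point of $M(\Psi_G)$ the gradient $\nabla\rho$ would lie in the span of the normal space to the $\Psi_G$-fibre, contradicting $\langle\xi,\nabla\rho\rangle > 0$ since $\xi$ is tangent to that fibre. Hence $M(\Psi_G) \subset V_G \subset G^{-1}(\Disc G)$, so $\Psi_G$ is $\rho$-regular.

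Finally, for the equivalence of \eqref{eq:emptytube} and \eqref{ssf} I would produce a Milnor vector field and invoke Theorem \ref{tm}. The natural candidate is again $\nu(x) := \xi(x) = (q_1 x_1, \ldots, q_m x_m)$, for which conditions (c1), (c2), (c3) of Definition \ref{def-gvf} are exactly the three properties of $\xi$ recorded above: tangency to the $\Psi_G$-fibres, positivity against $\nabla\rho$, positivity against $\nabla\|G\|^2$. Alternatively one can bypass the explicit vector field by checking hypothesis (b) of Corollary \ref{c:milnorset}: each connected component $M_\beta$ of $M(G) \m G^{-1}(\Disc G)$ is $\bR_+$-invariant (since $M(G)$ is $\bR_+$-invariant, because $\rho \not\pitchfork G$ is preserved by the weighted dilations up to scaling), hence the orbit of any point of $M_\beta$ lies in $M_\beta$ and meets each level $\Psi_G = y$ in that orbit's single point but sweeps a positive-dimensional set; more to the point $M_\beta \cap X_y$ contains, for a suitable $y$, a positive-dimensional piece because $M_\beta$ itself has dimension $\ge 1$ (it is a cone) and the generic $\Psi_G$-fibre meets it properly. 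Either route gives the MVF and hence the equivalence. I expect the main obstacle to be the careful verification of $\rho$-regularity: one must rule out $\rho$-nonregular points of $\Psi_G$ lying off $V_G$, and while the Euler vector field argument is morally immediate, making it rigorous requires correctly identifying the normal space to a $\Psi_G$-fibre (spanned by the $\Omega_k = G_1\nabla G_k - G_k\nabla G_1$ as in the proof of Theorem \ref{eqt1}) and showing the homogeneity relation $\sum_j q_j x_j \partial_j G_k = d\, G_k$ forces $\langle \xi, \Omega_k\rangle = 0$ while $\langle \xi, \nabla\rho\rangle > 0$, so that $\nabla\rho$ cannot be a combination of the $\Omega_k$ alone.
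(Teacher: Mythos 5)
Your proof is correct and, in outline, coincides with the paper's: conical invariance gives nice-ness and radiality of $\Disc G$, condition \eqref{eq:main} together with \cite[Lemma 3.3]{ART} gives the tube fibration, the Euler vector field gives $\rho$-regularity of $\Psi_G$ and hence the sphere fibration via Theorem \ref{sf2}, and a MVF plus Theorem \ref{tm} gives the equivalence. The one genuine divergence is the last step. The paper does not exhibit an explicit MVF: it invokes Corollary \ref{c:milnorset}(b) and the bisector field \eqref{eqe7}, arguing that each component $M_\beta$ of $M(G)\m G^{-1}(\Disc G)$ is invariant under the $\bR_{+}$-action and therefore meets some fibre $X_y$ in a positive-dimensional germ. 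Your primary route instead checks directly that the Euler field $\xi(x)=(q_1x_1,\ldots,q_mx_m)$ already satisfies (c1)--(c3) of Definition \ref{def-gvf}: $\Psi_G(t\cdot x)=\Psi_G(x)$ gives (c1), $\langle\xi(x),\nabla\rho(x)\rangle=2\sum_j q_jx_j^2>0$ gives (c2), and $\langle\xi(x),\nabla\|G(x)\|^2\rangle=2d\|G(x)\|^2>0$ off $V_G$ gives (c3). This is cleaner and arguably preferable: it bypasses Milnor's Lemma and the bisector construction entirely, and it also avoids having to justify the $\bR_{+}$-invariance of $M(G)$, which --- unlike the invariance of $\Sing G$ and of the $\Psi_G$-fibres --- is not completely immediate for non-trivial weights, since $\nabla\rho(t\cdot x)$ and $\nabla G_k(t\cdot x)$ rescale by the coordinate-dependent factors $t^{q_j}$ and $t^{d-q_j}$. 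Your secondary route is exactly the paper's. One small slip in its phrasing: the $\bR_{+}$-orbit of a point does not ``meet each level $\Psi_G=y$ in a single point''; it lies entirely inside the single fibre $X_{\Psi_G(x)}$, and that is precisely what makes $M_\beta\cap X_y$ positive-dimensional.
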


\begin{proof}
 	The image of $G$ is a real cone and this cone is stable as a germ, in the sense that $G(B^{m}_{\e})$ and $G(B^{m}_{\e'})$ have the same germs at the origin, for any $0< \e' < \e$.  Moreover, the boundary $\partial\overline{\im G}$ is also a conical set germ at the origin.
	
	The image by G of any analytic germ $X\subset \bR^{m}$
	which is invariant under the $\bR_{+}$-action is a conical germ, and  $\Sing G$ is such an invariant set germ. 
	It follows that 
  $\Disc G$ is well-defined as a germ, and it is radial.  These show that $G$ is a nice map germ, without using the hypothesis
  about condition \eqref{eq:main}.
  
 The assumed condition \eqref{eq:main} insures now the existence of the Milnor-Hamm tube fibration via \cite[Lemma 3.3]{ART}.  Let us see that the Milnor-Hamm sphere fibration exists too.  It was proved in \cite[Prop. 3.2]{ACT} by using the Euler vector field 
$\gamma(x):=\sum_{j=1}^{m}q_{j}x_{j}({\partial}/{\partial x_{j}})$  that the spheres are transversal to the fibres of the map $\Psi_{G}$. In our setting this implies that $\Psi_{G}$ is $\rho$-regular, thus our claim follows by  Theorem \ref{sf2}.
  
   The existence of a Milnor vector field follows by noting that any connected component $M_\beta$ of $M(F) \m V_F$ is also invariant under the $\bR_{+}$-action, and thus we may  apply Corollary \ref{c:milnorset}(b).
\end{proof}

  \smallskip
  
  \begin{example}\cite[Example 5.6]{ART}\label{e:non-connected}
Let $G:(\bR^3,0)\to (\bR^2,0)$ given by $G(x,y,z)=(xy,z^2)$ is radial homogeneous. One has $V_G =\{x=z=0\}\cup \{y=z=0\}$, $\Sing G=\{z=0\}\cup \{x=y=0\}$, $\Disc G= \{(0,\beta)\,|\,\beta \ge0\}\cup \{(\lambda,0)\,|\, \lambda \in \bR\}$, and $G^{-1}(\Disc G)=\{x=0\}\cup\{y=0\}\cup \{z=0\}$.
We see that $\Disc G$ is radial, as predicted by Theorem \ref{t2}. 

By further computations one gets $M(G)= \{x=\pm y\}\cup\{z=0\}$. To check that $G$ satisfies the condition \eqref{eq:main}, let us  consider $p_0=(x_0,y_0,z_0)\in \overline{M(G)\m G^{-1}(\Disc G)}\cap V_G$.  Then there is a sequence $p_n:=(x_n,y_n,z_n)\in M(G)\m G^{-1}(\Disc G)$ such that $p_n \to p_0$ with $p_0\in V_G$. Consequently, $z_0 = 0$ and $x_n=\pm y_n \neq 0$ since $p_{n}\not \in G^{-1}(\Disc G)$.
Thus $x_0=\lim x_n=\pm \lim y_n=y_0=0,$ and therefore $p_0=(0,0,0)$.	

Then by Theorem \ref{t2} the map germ $G$ has Milnor-Hamm tube and sphere fibration, and the fibrations \eqref{eq:emptytube} and  \eqref{ssf} are equivalent.
 \end{example}

 \begin{corollary}\label{t3}
 	Let $(f,g)$ be a holomorphic map germ which is Thom regular  at $V_{(f,g)}$, and such that $f$ and $g$ do not have common factor of order $>0$.
	
	 If $f\bar g$ is a radial weighted homogeneous function, then $f\bar g$ has  Milnor-Hamm  tube and sphere fibrations,  and the fibrations \eqref{eq:emptytube} and \eqref{ssf} are equivalent.
 \end{corollary}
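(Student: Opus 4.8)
The plan is to deduce this from Theorem~\ref{t2}. Write $G := f\bar g : (\bC^{n},0)\to(\bC,0)$, viewed as a real analytic map germ $(\bR^{2n},0)\to(\bR^{2},0)$, so that $V_G = f^{-1}(0)\cup g^{-1}(0)$. By hypothesis $G$ is radial weighted homogeneous, and since $f$ and $g$ have no common factor of positive order, $G$ is \emph{nice} with radial discriminant by \cite[Theorem~2.6]{ART} (see Example~\ref{ex3}); this niceness and radiality also come out of the first part of the proof of Theorem~\ref{t2}. As $\Disc G$ is radial we have $0\in\Disc G$, hence $G^{-1}(0)\subseteq G^{-1}(\Disc G)$. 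Thus the only hypothesis of Theorem~\ref{t2} still to be checked is the condition~\eqref{eq:main} for $G$; once this is established, Theorem~\ref{t2} yields at once that $f\bar g$ has Milnor--Hamm tube and sphere fibrations and that \eqref{eq:emptytube} and \eqref{ssf} are equivalent, which is exactly the assertion.

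To verify \eqref{eq:main} for $G=f\bar g$ I would pass through Thom regularity: as recalled just after \eqref{eq:main} (see \cite{dST0,dST1,Ma,ACT}), if $G$ is Thom regular along $V_G$ then $G$ satisfies \eqref{eq:main}. So it is enough to show that $f\bar g$ is Thom regular along $V_{f\bar g}=f^{-1}(0)\cup g^{-1}(0)$. This being a local statement, I would treat separately the open parts $f^{-1}(0)\setminus g^{-1}(0)$ and $g^{-1}(0)\setminus f^{-1}(0)$ of $V_{f\bar g}$, over which $G$ equals, respectively, $f$ times the unit $\bar g$ and $\bar g$ times the unit $f$, so that Thom regularity of $G$ there reduces to that of the holomorphic germ $f$ along $f^{-1}(0)$, respectively $g$ along $g^{-1}(0)$ --- which always holds for a single holomorphic function germ --- and the remaining part $V_{(f,g)}=f^{-1}(0)\cap g^{-1}(0)$, over which one must use the assumed Thom regularity of the holomorphic map $(f,g)$ at $V_{(f,g)}$. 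That this assumption indeed transfers to Thom regularity of the mixed function $f\bar g$ along $V_{f\bar g}$, and hence gives \eqref{eq:main} for $f\bar g$, is the kind of statement established in \cite{PT}, which I would simply invoke.

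The step I expect to be the real obstacle is precisely this transfer of the Thom condition: one must control the limit positions of the tangent spaces to the fibres of the mixed function $f\bar g$ along sequences approaching $V_{(f,g)}$ in terms of the corresponding limits for the holomorphic map $(f,g)$, the strata of $V_{f\bar g}$ lying off $V_{(f,g)}$ being harmless as indicated above. Once this is granted, $G=f\bar g$ satisfies \eqref{eq:main}, all hypotheses of Theorem~\ref{t2} hold, and the Corollary follows. A self-contained alternative would be to argue directly that $\overline{M(G)\setminus G^{-1}(\Disc G)}\cap V_G\subseteq\{0\}$: a sequence $x_k\to p\in V_G$ with $x_k\in M(G)$ and $G(x_k)\notin\Disc G$ would, via the Curve Selection Lemma, produce a real analytic arc $\gamma(s)\to p$ along which $\rho$ fails to be submersive on the fibre $G^{-1}(G(\gamma(s)))$ while $G(\gamma(s))\notin\Disc G$, which one would then contradict using the local triviality of the fibration of $(f,g)$ near $V_{(f,g)}$ furnished by Thom regularity; but carrying this out essentially reproves the result of \cite{PT} invoked above.
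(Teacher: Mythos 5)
Your proposal is correct and follows essentially the same route as the paper: the paper's proof likewise invokes the transfer of Thom regularity from $(f,g)$ to $f\bar g$ (via \cite[Theorem 4.3]{ART}, which extends \cite[Theorem 3.1]{PT}), deduces condition \eqref{eq:main}, and concludes by Theorem \ref{t2}. The step you flag as the real obstacle is exactly the one the paper outsources to that cited result.
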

 \begin{proof}
The Thom regularity of $(f,g)$ implies the Thom regularity of $f\bar g$ by \cite[Theorem 4.3]{ART}  which extends \cite[Theorem 3.1]{PT}, and thus condition \eqref{eq:main} is verified and we may apply the above Theorem  \ref{t2} to conclude.
 \end{proof}
 
   \smallskip
 
 \begin{corollary}\label{spr2}
 	Let $f$ and $g$ be holomorphic, radial weighted-homogeneous such that the map germ $(f,g)$ is an ICIS. Then the map germ $f\bar g: (\bC^n, 0) \to (\bC, 0)$ has  Milnor-Hamm tube and sphere fibrations,  and  the fibrations \eqref{eq:emptytube} and \eqref{ssf} are equivalent.
 \end{corollary}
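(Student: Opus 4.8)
The plan is to derive this statement directly from Corollary \ref{t3}, by checking its three hypotheses for the holomorphic pair $(f,g)$: that $f\bar g$ is radial weighted homogeneous, that $f$ and $g$ have no common factor of positive order, and that $(f,g)$ is Thom regular at $V_{(f,g)}$.

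The first two points are quick. Since $f$ and $g$ are radial weighted homogeneous with respect to the same $\bR_{+}$-action $t\cdot x=(t^{q_1}x_1,\dots,t^{q_n}x_n)$, say of degrees $d_f$ and $d_g$, one has $f(t\cdot x)=t^{d_f}f(x)$ and $g(t\cdot x)=t^{d_g}g(x)$; conjugating the second identity and using $t\in\bR_{+}$ gives $(f\bar g)(t\cdot x)=t^{d_f+d_g}(f\bar g)(x)$, so $f\bar g$ is radial weighted homogeneous of degree $d_f+d_g$. As for a common factor, if $f$ and $g$ shared an irreducible factor $h$ of positive order, then $(f,g)$ would be singular along the positive-dimensional germ $\{h=0\}\subseteq V_{(f,g)}$, forcing $\{h=0\}\subseteq\Sing(f,g)\cap V_{(f,g)}$ and contradicting the ICIS hypothesis $\Sing(f,g)\cap V_{(f,g)}=\{0\}$.

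There remains the Thom regularity of $(f,g)$ at $V_{(f,g)}$, which is the step I expect to be the main obstacle --- although for an ICIS it is classical (going back to Hamm) and in fact fairly direct: since $\Sing(f,g)$ is closed and meets $V_{(f,g)}$ only at the origin, $(f,g)$ is a submersion on a neighbourhood of $V_{(f,g)}\m\{0\}$, so near those points $V_{(f,g)}$ is a regular fibre of $(f,g)$ and Thom's $a_{(f,g)}$ condition holds for the stratification $\{\bC^n\m V_{(f,g)},\, V_{(f,g)}\m\{0\},\, \{0\}\}$ by continuity of the tangent planes to the fibres, while along the stratum $\{0\}$ the condition is vacuous. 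Granting this, Corollary \ref{t3} applies verbatim to $f\bar g$: inside its proof the Thom regularity of $(f,g)$ is transported to $f\bar g$ by \cite[Theorem 4.3]{ART}, yielding condition \eqref{eq:main} for $f\bar g$, and then Theorem \ref{t2} delivers the Milnor-Hamm tube and sphere fibrations together with the equivalence of \eqref{eq:emptytube} and \eqref{ssf}, using the radial weighted homogeneity of $f\bar g$ established above.
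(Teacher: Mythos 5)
Your proof is correct and ends up on essentially the same track as the paper: both arguments ultimately reduce to Thom regularity of $f\bar g$, hence condition \eqref{eq:main}, and then to Theorem \ref{t2} via the $\bR_{+}$-action. The one difference is the first step: the paper invokes \cite[Theorem 4.3(a)]{ART} directly, which already packages the implication ``$(f,g)$ ICIS $\Rightarrow$ $f\bar g$ nice, Thom regular, with Milnor--Hamm tube fibration'', whereas you reroute through Corollary \ref{t3} by checking its three hypotheses yourself --- the radial weighted homogeneity of $f\bar g$ (your computation using $t\in\bR_{+}$ is right), the coprimality of $f$ and $g$ (correctly forced by the ICIS condition, since a common factor $h$ would put the positive-dimensional set $\{h=0\}$ inside $\Sing(f,g)\cap V_{(f,g)}$), and the classical Thom regularity of an ICIS pair along $V_{(f,g)}\m\{0\}$ (correct: the map is a submersion near $V_{(f,g)}\m\{0\}$, so tangent planes of nearby fibres converge to those of $V_{(f,g)}$, and the condition is vacuous at the point stratum). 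Your version makes explicit the verifications the paper delegates to the citation, at the harmless cost of assuming --- as the statement intends --- that $f$ and $g$ are weighted homogeneous for the same system of weights.
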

 
 \begin{proof}
Since  $(f,g)$ is an ICIS, it follows by \cite[Theorem 2.7]{ART} that the map germ $f\bar{g}$ is a nice map germ,  and that it is Thom regular and has a Milnor-Hamm tube fibration, by \cite[Theorem 4.3(a)]{ART}. If we add up the $\bR_{+}$-action then we get, as in Theorem  \ref{t2}  above, the existence of 
a Milnor-Hamm sphere fibration and thus the equivalence of the fibrations.
 \end{proof}
 
 Let us point out that if $(f,g)$ is an ICIS  then its discriminant  $\Disc f\bar g$ is either a union of semi-analytic curves or it is $\{0\}$. This is actually true for any complex analytic map germ $(f,g)$, as shown in \cite[Theorem 2.3]{PT2} and in  \cite{Oka5}, together with  precise criteria for $\Disc f\bar g$ being positive dimensional.  
  
 \begin{example} 
 Let us consider the map $f\bar g$, where $f,g :(\mathbb{C}^2, 0) \to (\mathbb{C},0)$,  $f(x,y) =x^{2}+y^{2}$ and $g(x,y)=  x^{2}-y^{2}$. One easily computes that $\Disc f\bar{g}$ is the real axis in $\bC$ (see e.g. \cite{PT,PT2}).
This verifies the assumptions of Corollary \ref{spr2}, thus  $f\bar g$ has Milnor-Hamm tube and sphere fibrations, and they are equivalent.
 \end{example}


\end{document}